\crefname{hypothesis}{Hypothesis}{Hypotheses}
\title{Reduction in optimal control with broken symmetry for collision and obstacle avoidance of multi-agent system on Lie groups\thanks{Submitted to the editors DATE.
\funding{The authors acknowledge financial support from the Spanish Ministry of Science
and Innovation, under grants PID2019-106715GB-C21, MTM2016-76702-P.}}}
\author{Efstratios Stratoglou\thanks{Programa de Doctorado de Autom\'atica y Rob\'otica, Universidad Polit\'ecnica de Madrid (UPM), 28006 Madrid, Spain
  (\email{ef.stratoglou@alumnos.upm.es}).}
\and Alexandre Anahory Simoes\thanks{Centre for Automation and Robotics (CSIC-UPM), Ctra. M300 Campo Real, Km 0,200, Arganda del Rey - 28500 Madrid, Spain  (\email{alexandre.anahory@car.upm-csic.es}, \email{leonardo.colombo@csic.es}).}
\and Leonardo J. Colombo\footnotemark[3]}
\def\BibTeX{{\rm B\kern-.05em{\sc i\kern-.025em b}\kern-.08em
    T\kern-.1667em\lower.7ex\hbox{E}\kern-.125emX}}
\DeclareMathOperator{\Cay}{Cay}
\DeclareMathOperator{\dCay}{dCay}
\newcommand{\g}{\mathfrak{g}}
\newcommand{\Vezero}{V_{i,0}^{\text{ext}}}
\newcommand{\R}{\mathbb{R}}
\newcommand{\Ad}{\text{Ad}}
\newcommand{\e}{\mbox{exp}}
\newcommand{\ad}{\text{ad}}
\newcommand{\se}{\mathfrak{se}(2)}
\DeclareMathOperator{\spn}{span}
\newcommand*{\addFileDependency}[1]{% argument=file name and extension
  \typeout{(#1)}% latexmk will find this if $recorder=0 (however, in that case, it will ignore #1 if it is a .aux or .pdf file etc and it exists! if it doesn't exist, it will appear in the list of dependents regardless)
  \@addtofilelist{#1}% if you want it to appear in \listfiles, not really necessary and latexmk doesn't use this
  \IfFileExists{#1}{}{\typeout{No file #1.}}% latexmk will find this message if #1 doesn't exist (yet)
}
\begin{document}

\maketitle

% REQUIRED
\begin{abstract}
We study the reduction by symmetry for optimality conditions in optimal control problems of left-invariant affine multi-agent control systems, with partial symmetry breaking cost functions for continuous-time and discrete-time systems. We recast the optimal control problem as a constrained variational problem with a partial symmetry breaking Lagrangian and obtain the reduced optimality conditions from a reduced variational principle via symmetry reduction techniques in both settings, continuous-time, and discrete-time. We apply the results to a collision and obstacle avoidance problem for multiple vehicles evolving on $SE(2)$ in the presence of a static obstacle.
\end{abstract}

% REQUIRED
\begin{keywords}
  Lagrangian systems, Symmetry reduction, Euler-Poincar\'e equations, Multi-agent control systems, Lie-Poisson integrators.
\end{keywords}

% REQUIRED
\begin{AMS}
 70G45, 70H03, 70H05, 37J15, 49J15
\end{AMS}

\section{Introduction}
\label{sec:introduction}
Lie groups symmetries appear naturally in many control systems problems \cite{BlControlled, bonnabel1, contreras, eche, arcak, grizzle, LG1, LG2, manolo, LG3, bonnabel, saccon, arian}. Methods for trajectory tracking and estimation algorithms for the pose of mechanical systems evolving on Lie groups are commonly employed for improving the accuracy of simulations, as well as to avoid singularities by working with coordinate-free expressions in the associated Lie algebra of the Lie group which describes the motion of the systems as a set of ordinary differential equations depending on an arbitrary choice of the basis for the Lie algebra.

Optimization problems on Lie groups have a long history \cite{jurdjevic} and have been applied to many problems in control engineering. In practice, many robotic systems exhibit symmetries that can be exploited to reduce some of the complexities in the system
models, for instance, degrees of freedom. Symmetries in optimal control for systems on Lie groups have been studied in \cite{B, K, KM, leo1, Tomoki, Tomoki-CDC} among many others, mainly for applications in robotic and aerospace engineering, and in particular, for spacecraft attitude control and underwater vehicles \cite{Leonard1}.  While most of the applications of symmetry reduction provided in the literature focus on the single-agent situation, only a few works studied the relation between multi-agent systems and symmetry reduction (see for instance \cite{JK2, vasile1, vasile2, SCD}). In this work, we employ symmetry reduction to study optimal control problems with broken symmetry for multi-agent systems on Lie groups while agents avoid collisions and obstacles in the configuration space. 

%\subsection{Related work}
 
 In our previous work, \cite{BCGO} we studied symmetry reduction in optimal control problems with broken symmetry for single-agent systems. In this work, we advance on the results of \cite{BCGO} by considering a multi-agent scenario. Hence, a new variational principle and reduction by symmetries performance are needed. The results in this paper are the Lagrangian/variational counterpart of those in \cite{SCD}; we also develop a discrete-time version of the results. From the Lagrangian point of view, we obtain the Euler--Poincar{\'e} equations from a constrained variational principle. By discretizing the variational principle in time, we obtain the discrete-time Lie--Poisson equations.

%The main idea of the strategy chosen in this work to obtain the reduced optimality conditions is as follows. First note that the coupling between the agents provided by an inter-agent potential function $V_{ij}$ occurs, in most of the cases, in the position of the agents, that is, on the Lie group $G$. We consider a representation of $G$ on a dual vector space $V^{*}$ at each edge of the graph $\mathcal{G}$ describing the interaction between the agents, and we couple the neighbors of an agent in the unreduced Lagrangian for the optimal control problem, with a parameter depending on vectors in $V^{*}$ that are acted on by $G$. Hence, the neighbors are coupled with the positions of a given vector in $V^{*}$ that is rotated around by the Lie group action. The vector that is taken here for position coupling breaks the full Lie group symmetry in the cost function in our optimal control problem, and hence we apply the semi-direct product reduction theory \cite{contreras, contreras2, GB, GBT, HMR, MRW1, MRW2}, to obtain the corresponding Euler-Poincar\'e system on the semi-direct product Lie algebra $\g\ltimes V^{*}$ at each node. This gives rise to a new system that finds no analogs in classical reduced-order models in optimal control of mechanical systems.

The main idea of the approached followed in this work to obtain the reduced optimality conditions is as follows. First note that the artificial potential $V^0$ used to prevent collisions with a fixed obstacle is not symmetry invariant. At the same time, we consider a representation of $G$ on a dual vector space $V^{*}$ at each node of the graph $\mathcal{G}$ which couples the neighbors of an agent in the unreduced Lagrangian for the optimal control problem, with a parameter depending on vectors in $V^{*}$ that are acted on by $G$. Hence, the neighbors are coupled with the vectors in $V^{*}$ that are acted by the adjoint representation. The associated action considered at this stage restores the full Lie group symmetry in the cost function from our optimal control problem, and hence we apply the semi-direct product reduction theory \cite{contreras, contreras2, GB, GBT, HMR, MRW1, MRW2}, to obtain the corresponding Euler-Poincar\'e system on the semi-direct product Lie algebra $\g\ltimes V^{*}$ at each node. This gives rise to a new system that finds no analogs in classical reduced-order models in optimal control of mechanical systems.

%\subsection{Organization of the paper}

The paper is organized as follows. In Section \ref{sec2}, we introduce some preliminaries about  geometric mechanics on Lie groups and Lie group actions. In Section \ref{sec3}, we present the problem under study together with a motivating example. In Section \ref{sec4} we study the Euler--Poincar{\'e} reduction of optimal control problems for left-invariant multi-agent control systems on Lie groups with partial symmetry breaking cost functions. Furthermore, we consider the discrete-time framework and obtain the discrete-time Lie--Poisson equations in Section \ref{sec5}. In Section \ref{sec6} an example is considered to illustrate the theory. Finally, some concluding remarks are given in Section \ref{sec7}.

\section{Lie group actions and representations}\label{sec2}

Let $Q$ be the configuration space of a mechanical system, a differentiable manifold of dimension $d$ with local coordinates $q=(q^1,\ldots,q^d)$. Let $TQ$ be the tangent bundle of $Q$, locally described by positions and velocities for the system, $v_q=(q^1,\ldots, q^d,\dot{q}^{1},\ldots,\dot{q}^{d})\in TQ$ with $\hbox{dim}(TQ)=2d$. Let $T^{*}Q$ be its cotangent bundle, locally described by the positions and the momentum for the system, i.e., $(q,p)\in T^{*}Q$ with $\hbox{dim}(T^{*}Q)=2d$. The tangent bundle at $q\in Q$ has a vector space structure and it is denoted as $T_{q}Q$. The cotangent bundle at $q\in Q$ is just the dual space of $T_{q}Q$ and denoted as $T_{q}^{*}Q$.  In that sense, the momentum $p_q$ at $q\in Q$ can be thought as the dual of the velocity vector $v_q$ at the point $q\in Q$.The dynamics of a mechanical system is described by the equations of motion determined by a Lagrangian function $L:TQ\to\R$ given by $L(q,\dot{q})=K(q,\dot{q})-V(q)$, where $K:TQ\to\R$ denotes the kinetic energy and $V:Q\to\R$ the potential energy of the system. The equations of motion are given by the Euler-Lagrange equations 
$\displaystyle{\frac{d}{dt}\bigg(\frac{\partial L}{\partial\dot{q}^i}\bigg)=\frac{\partial L}{\partial q^i}, \; \; i=1,\dots,d}$, which determine a system of second-order differential equations. In the case the configuration space  of the system is a Lie group, Euler-Lagrange equations can be reduced to a first-order system of equations.

\begin{definition}Let $G$ be a Lie group and $Q$ a smooth manifold. A \textit{left-action} of $G$ on $Q$ is a smooth map $\Phi:G\times Q\to Q$ such that $\Phi(\bar{e},g)=g$ and $\Phi(h,\Phi(g,q))=\Phi(hg,q)$ for all $g,h\in G$ and $q\in Q$, where $\bar{e}$ is the identity of the group $G$ and the map $\Phi_g:Q\to Q$ given by $\Phi_g(q)=\Phi(g,q)$ is a diffeomorphism for all $g\in G$.\end{definition}

\begin{definition}A function $f:Q\to\R$ is called \textit{left invariant} under $\Phi_g$ if $f\circ\Phi_g=f$ for any $g\in G$.\end{definition}% This invariance is also called \textit{left-invariance} because the action is a left-action.

For a finite dimensional Lie group $G$, its Lie algebra $\g$ is defined as the tangent space to $G$ at the identity, $\mathfrak{g}:=T_{\bar{e}}G$. Let $L_g:G\to G$ be the left translation of the element $g\in G$ given by $L_g(h)=gh$ where $h\in G$. $L_g$ is a diffeomorphism on $G$ and a left-action of $G$ on $G$ \cite{HSS}. Its tangent map  (i.e, the linearization or tangent lift) is denoted by $T_{h}L_{g}:T_{h}G\to T_{gh}G$. Similarly, the cotangent map (cotangent lift), is defined as $(T_{h}L_g)^{*}$, the dual map of the tangent lift denoted by $T_{h}^{*}L_{g}:T^{*}_{gh}G\to T^{*}_{h}G$, and determined by the relation $\langle(T_hL_g)^{*}(\alpha_{gh}), y_h\rangle=\langle\alpha_{gh},(T_hL_g)y_h\rangle$, $y_h\in T_{h}G$, $\alpha_{gh}\in T^{*}_{gh}G$. It is well known that the tangent and cotangent lift are Lie group actions. Here, $\langle\cdot,\cdot\rangle:V^{*}\times V\to\mathbb{R}$ with $V$ a finite dimensional vector space denotes the so-called \textit{natural pairing} between vectors and co-vectors and defined by $\langle y,x\rangle:=y\cdot x$ for $y\in V^{*}$ and $x\in V$, where $y$ is understood as a column vector and $x$ as a row vector. For a matrix Lie algebra $\langle y,x\rangle=y^{T}x$ (see \cite{HSS}, Section $2.3$). Using this pairing between vectors and co-vectors, for $g,h\in G$, $y\in\mathfrak{g}^{*}$ and $x\in\mathfrak{g}$, one can write $\langle T^{*}_{g}L_{g^{-1}}(y),T_{{\bar{e}}}L_{g}(x)\rangle=\langle y,x\rangle$.

Denote by $\ad^{*}:\mathfrak{g}\times\mathfrak{g}^{*}\to\mathfrak{g}^{*}$, $(\xi,\mu)\mapsto\ad^{*}_{\xi}\mu$ the \textit{co-adjoint operator}, defined by $\langle\ad_{\xi}^{*}\mu,\eta\rangle=\langle\mu,\ad_{\xi}\eta\rangle$ for all $\eta\in\mathfrak{g}$, where the $\ad:\mathfrak{g}\times\g\to\g$ denotes the adjoint operator on $\mathfrak{g}$ given by the Lie-bracket, i.e., $\ad_{\xi}\eta=[\xi,\eta]$, $\xi,\eta\in\g$. We also define the \textit{adjoint action} of $G$ on $\g$, denoted by  $\hbox{Ad}_{g}:\mathfrak{g}\to\mathfrak{g}$ and given by $\hbox{Ad}_{g}\chi:=g\chi g^{-1}$ where $\chi\in\mathfrak{g}$, and the \textit{co-adjoint action} of $G$ on $\g^{*}$, denoted by  $\hbox{Ad}_{g}^{*}:\mathfrak{g}^{*}\to\mathfrak{g}^{*}$, and given by $\langle\hbox{Ad}_{g}^{*}\alpha,\xi\rangle=\langle\alpha, \hbox{Ad}_{g}\xi\rangle$ with $\alpha\in\g^{*}$. 

For $q\in Q$, the \textit{isotropy} (or \textit{stabilizer} or \textit{symmetry}) group of $\Phi$ at $q$ is given by $G_q:=\{g\in G|\Phi_g(q)=q\}\subset G$. Since $\Phi_q(g)$ is a continuous,  $G_q=\Phi_q^{-1}(q)$ is a closed subgroup and hence a Lie subgroup of $G$ (see \cite{MR} Sec. $9.3$ for instance).

\begin{example}\label{example1}
Consider the special Euclidean group $\mathrm{SE}(2)$ of rotations and translations on the plane. Elements on $SE(2)$ can be described by transformations of $\mathbb{R}^2$ of the form $z \mapsto Rz+r$, with $r\in \mathbb{R}^2$ and $R\in SO(2)$. This transformation can be represented by $g=(R, r)$, for $\displaystyle{R=\left(
     \begin{array}{cc}
       \cos \theta & -\sin \theta\\
       \sin \theta & \cos \theta \\
     \end{array}
   \right)}$ and $r=[x,y]^{T}$. The composition law is $(R,r)\cdot(S,s)=(RS,Rs+r)$ with identity element $(I,0)$ and inverse $(R,r)^{-1}=(R^{-1},-R^{-1}r)$. Under this composition rule, $SE(2)$ has the structure of the semidirect product Lie group  $SO(2)\ltimes\mathbb{R}^2$. Here, as usual in the literature, we denote by $\ltimes$ the semidirect product of Lie groups.

The Lie algebra $\mathfrak{se}(2)$ of $SE(2)$ is determined by\newline
$\displaystyle{\mathfrak{se}(2)=\Big{\{}\left(
     \begin{array}{cc}
      A & b\\
       0 & 0 \\
     \end{array}
   \right)\Big{|} A\in \mathfrak{so}(2)\simeq\mathbb{R},\, b\in \mathbb{R}^2\Big{\}}}$.  In the following, for simplicity, we write $A=-a \mathbb{J} $, $a\in\mathbb{R}$, where $ \mathbb{J} =\left(
    \begin{array}{cc}
      0& 1\\
       -1 &0 \\
     \end{array}
   \right).$ Therefore,  we denote $\xi=(a,b)\in\mathfrak{se}(2)$. The adjoint action of $SE(2)$ on $\mathfrak{se}(2)$ is given by $\hbox{Ad}_{(R,r)}(a,b)=(a,a\mathbb{J}r+Rb)$ (see \cite{H}, pp. 153 for instance), so, $\hbox{Ad}_{(R,r)^{-1}}(a,b)=(a,R^{T}(b-a\mathbb{J}r))$.
\end{example}

Next, we provide the infinitesimal description of a Lie group action, which will be an important concept in the remainder of the paper. 

\begin{definition}
Given a Lie group action $\Phi:G\times Q\to Q$, for $\xi\in\mathfrak{g}$, the map $\Phi_{\hbox{exp}(t\xi)}:Q\to Q$ is a flow on $Q$. The corresponding vector field on $Q$, given by $\xi_Q(q):=\frac{d}{dt}\mid_{t=0}\Phi_{\hbox{exp} (t\xi)}(q)$ is called the \textit{infinitesimal generator} of the action corresponding to $\xi$.
\end{definition}

\begin{definition}\label{defLI}Denote by $\mathfrak{X}(G)$ the set of vector fields on $G$. A vector field $X\in\mathfrak{X}(G)$ is called \textit{left invariant} if $T_{h}L_g(X(h))=X(L_g(h))=X(gh)$ $\forall\,g,h\in G$.\end{definition}

In particular for $h={\bar{e}}$, this means that a vector field $X$ is left-invariant if $\dot{g}=X(g)=T_{{\bar{e}}}L_{g}\xi$ for $\xi=X({\bar{e}})\in\mathfrak{g}$. As $X$ is left invariant, $\xi=X({\bar{e}})=T_{g}L_{g^{-1}}\dot{g}$. The tangent map $T_{{\bar{e}}}L_g$ shifts vectors based at ${\bar{e}}$ to vectors based at $g\in G$. By doing this operation for every $g\in G$ we define a vector field as $\displaystyle{\xi}(g):=T_{{\bar{e}}}L_g(\xi)$ for $\xi:=X({\bar{e}})\in T_{{\bar{e}}}G$. Note that the vector field $\xi(g)$ is left invariant, because $\xi(hg)=T_{{\bar{e}}}(L_h\circ
L_g)\xi=(T_{g}L_h)\circ(T_{{\bar{e}}}L_g)\xi=T_gL_h \xi(g).$ 

\begin{example}
Consider the Euclidean Lie group $\mathbb{R}^{n}$ with the sum as group operation. For all $g\in\mathbb{R}^{n}$, the left translation $L_g$ is the usual translation on $\mathbb{R}^n$, that is, $L_g(h)=g+h$, $h\in\mathbb{R}^n$. So that, the tangent map to $L_g$ is the identity map on $\mathbb{R}^n$, that is, $T_{0}L_g=\hbox{id}_{T_{0}\mathbb{R}^n}$, where we are using that $T_{h}\mathbb{R}^n\simeq\mathbb{R}^n$ for all $h\in\mathbb{R}^n$, since $\mathbb{R}^n$ is a vector space. Therefore, left-invariant vector fields are constant vector fields, that is, $X=v_1\frac{\partial}{\partial x_1}+\ldots+v_n\frac{\partial}{\partial x_n}$ for $v=(v_1,\ldots, v_n)\in T_{0}\mathbb{R}^n$ and $x=(x_1,\ldots,x_n)\in\mathbb{R}^n$.
\end{example}

%A representation of a Lie group $G$ is an action of the group on a vector space $V$, under some suitable properties, via linear transforms. Intuitively, it shows which symmetries of space can be effected by the group. The use of Lie group representation in this work allow us to understand symmetries related with the interaction coupling between agents and their distances to a prescribed obstacle in the workspace. We refer to \cite{V} for more details on representations of Lie groups.

Consider a Lie group $G$, a vector space $V$ and the \textit{representation of $G$ on $V$} given by $\rho:G\times V\to V$, $(g,v)\mapsto\rho_g(v)$, which is a left action, and it is defined by the relation $\rho_{g_1}(\rho_{g_2}(v))=\rho_{g_1g_2}(v)$, $g_1,g_2\in G$. Its dual is given by $\rho^*:G\times V^*\to V^*,$ $(g,\alpha)\mapsto\rho^*_g(\alpha),$ satisfying $\langle\rho^*_{g^{-1}}(\alpha),v\rangle=\langle\alpha,\rho_{g^{-1}}(v)\rangle$.\

%For instance, the representation of $2$D rotations $SO(2)$ in $\mathbb{R}^2$ are the $2$-by-$2$ orthonormal matrices with determinant $1$. 

The \textit{infinitesimal generator} of the left action of $G$ on $V$ is $\rho':\g\times V\to V,$ $(\xi,v)\mapsto\rho'(\xi,v)=\frac{d}{dt}|_{t=0}\rho_{e^{t\xi}}(v).$ For every $v\in V$ consider the linear transformation $\rho'_v:\g\to V,$ $\xi\mapsto\rho'_v(\xi)=\rho'(\xi,v)$ and its dual $\rho'^*_v: V^*\to\g^*,$ $\alpha\mapsto\rho'^*_v(\alpha).$ The last transformation defines the \textit{momentum map} $\textbf{J}_V:V\times V^*\to\g^*,$ $(v,\alpha)\mapsto \textbf{J}_V(v,\alpha):=\rho'^*_v(\alpha)$ such that for every $\xi\in\g$, $\displaystyle{\langle \textbf{J}_V(v,\alpha),\xi\rangle=\langle\rho'^*_v(\alpha),\xi\rangle=\langle \alpha,\rho'_v(\xi)\rangle=\langle\alpha,\rho'(\xi,v)\rangle}$.

 For $\xi\in\g$, consider the map $\rho'_\xi:V\to V$, $v\mapsto\rho'_\xi(v)=\rho'(\xi,v)$ and its dual $\rho'^*_\xi:V^*\to V^*$, $\alpha\mapsto\rho'^*_\xi(\alpha)$ such that $\langle\rho'^*_\xi(\alpha),v\rangle=\langle\alpha,\rho'_\xi(v)\rangle$. Then this satisfies $\displaystyle{\langle\mathbf{J}_V(v,\alpha),\xi\rangle=\langle\alpha,\rho'(\xi,v)\rangle=\langle\alpha,\rho'_\xi(v)\rangle=\langle\rho'^*_\xi(\alpha),v\rangle}$. See \cite{HSS} and \cite{MR} for more details on the momentum map.
\begin{example}
Let $V=\mathfrak{g}$ and $\rho$ the adjoint representation of $G$ on $V$, i.e., $\rho_g=\hbox{Ad}_g$, for anly $g\in G$. So, for $\alpha\in V^{*}=\mathfrak{g}$, $\rho^{*}$ is the coadjoint representation of $G$ on $V^{*}$, i.e., $\rho_{g}^{*}=\hbox{Ad}_{g}^{*}$. We also have that the infinitesimal generator for the adjoint representation is $\rho_{\xi}'=\hbox{ad}_{\xi}$ for any $\xi\in V$ (see \cite{HSS}, Def. $6.4$, pp. $225$), and it follows that $\langle\mathbf{J}_{V}(x,\alpha),\xi\rangle = \langle\alpha,\ad_{\xi}x\rangle= \langle\alpha,-\ad_{x}\xi\rangle= \langle-\ad_{x}^{*}\alpha,\xi\rangle$, which gives $\mathbf{J}_{V}(x,\alpha) = -\ad_{x}^{*}\alpha$.

Similarly, if now $V=\mathfrak{g}^{*}$ and $\rho$ is the coadjoint representation of $G$ on $V$, i.e., $\rho_g=\hbox{Ad}^{*}_{g^{-1}}$, for any $g\in G$, then  $\rho_{\xi}'=-\ad_{\xi}^{*}$ for any $\xi\in\mathfrak{g}$. So, it follows that $\langle\mathbf{J}_{V}(x,\alpha),\xi\rangle = \langle-\ad_{\xi}\alpha,x\rangle= \langle x,\ad_{\alpha}\xi\rangle= \langle\ad_{\alpha}^{*}x,\xi\rangle$, which gives $\mathbf{J}_{V}(x,\alpha) = \ad_{\alpha}^{*}x$, for $\alpha\in\mathfrak{g}$ and $x\in\mathfrak{g}^{*}$.
\end{example}
\section{Problem Formulation}

\subsection{Left-invariant multi-agent control systems}\label{sec3}

Denote by $\mathcal{N}$ a set consisting of $s\geq 2$ free agents, and by $\mathcal{E}=\mathcal{N}\times\mathcal{N}$ the set describing the interaction between them. The neighboring relationships are described by an undirected graph $\mathcal{G}=(\mathcal{N},\mathcal{E})$, where $\mathcal{N}$ is the set of vertices and $\mathcal{E}$ the set of edges for $\mathcal{G}$. We further assume $\mathcal{G}$ is static and connected.  For every agent $i\in \mathcal{N}$ the set $\mathcal{N}_i=\{j\in\mathcal{N}:(i,j)\in\mathcal{E}\}$ denotes the neighbors of that agent. The agent $i\in\mathcal{N}$ evolves on an $n$-dimensional Lie group $G$ and its configuration is denoted by $g_i\in G$. We denote by $G^s$ and by $T_{\overline{e}}G^s=:\g^s$ the cartesian products of $s$ copies of $G$ and $\g$, respectively, where $\overline{e}=(\overline{e}_1,\overline{e}_2,\dots,\overline{e}_s)$ is the identity of $G^s$ with $\overline{e}_i$ being the identity element of the $i^{th}$-Lie group of $G^s$. The $i^{th}$-
Lie group as well as the $i^{th}$-Lie algebra will be denoted by $G_i$ and $\g_i$, respectively.

For each agent $i\in\mathcal{N}$ there is an associated left-invariant control system described by the kinematic equations \begin{equation}\label{kin-each-agent}
    \dot{g}_i=T_{\overline{e}_i}L_{g_i}(u_i), \;\; g_i(0)=g^0_i,
\end{equation} where $g_i(t)\in C^1([0,T],G_i), \; T\in\mathbb{R}$ fixed, $u_i\in\g_i$ is the control input and $g^i_0\in G$ is considered as the initial state condition. Note that while for each $i\in\mathcal{N}$, $\dim\g_i=n$ with $\g_i=\hbox{span}\{e_1^i,e_2^i,\ldots, e_n^i\}$, then the control inputs may be described by $u_i=[u_i^1,u_i^2,\dots,u_i^m]^T$, where $u_i(t)\in C^1([0,T],\g_i)$, with $m\leq n$. Hence, the control input for each agent is given by $\displaystyle{u_i(t)=e_0^i+\sum_{k=1}^{m}u^k_i(t)e_k^i}$, where $e_0^i\in\g_i.$ Thus, the left-invariant control systems \eqref{kin-each-agent} for each agent $i\in\mathcal{N}$ can be written as \begin{equation}\label{kin-each-agent-basis}
    \dot{g}_i(t)=g_i(t)\bigg(e_0+\sum_{k=1}^{m}u^k_i(t)e_k^i\bigg),\;\; g_i(0)=g^0_i.
\end{equation}

Note that the class of control systems described by \eqref{kin-each-agent-basis} capture underactuated as well as holonomic and nonholonomic constrained agents.

\subsection{Motivating Example}

   Consider the agents $i\in\mathcal{N}$ and $j\in\mathcal{N}_i$ represented as $g_k=(R_k,r_k)$, $k\in\{i,j\}$. Note that $g^{-1}_ig_j=(R_i^{T}R_j,-R_i^{T}(r_i-r_j))$, then, $\hbox{Ad}_{g_i^{-1}g_j}(1,0)=(1,\mathbb{J}R_i^{T}(r_j-r_i))$. The inner product on $\mathfrak{se}(2)$ is given by $\langle\xi_1,\xi_2\rangle=\hbox{tr}(\xi_1^T\xi_2)$ for $\xi_1,\xi_2\in\mathfrak{se}(2)$ and hence, the norm is given by $||\xi||=\sqrt{\hbox{tr}(\xi^T\xi)},$ for any $\xi\in\mathfrak{se}(2)$. For $\xi=(a,b)\in\mathfrak{se}(2)$ we can write the norm of $\xi$ as $||(a,b)||=\sqrt{2a^2+b^{T}b}$. Therefore, $||\hbox{Ad}_{g_i^{-1}g_j}(1,0)||^{2}=2+|\mathbb{J}R_i^{T}(r_j-r_i)|^{2}=2+|r_j-r_i|^2$, where we have used that $R,\mathbb{J}\in SO(2)$ for the last equality. Hence, it follows that  $|r_i-r_j|=\sqrt{||\hbox{Ad}_{g_i^{-1}g_j}(1,0)||^{2}-2}$.
   
   The previous computation shows that, if the interaction between agents is determined by a function depending on the distances between them, that is, $V_{ij}:G\times G\to\mathbb{R}$, is such that $V_{ij}(g_i,g_j)=V(|r_i-r_j|)$ for some $V:\mathbb{R}_{\geq 0}\to\mathbb{R}$; then, $V_{ij}$ is $SE(2)$-invariant, that is $V_{ij}(hg_i,hg_j)=V_{ij}(g_i,g_j)$. An alternative reasoning of this invariance property has been shown in \cite{O-SM}.
   
   Next, suppose that we wish to write the distance from an arbitrary point $r\in\mathbb{R}^2$ to a fixed point $x_0\in\mathbb{R}^2$ in terms of the adjoint action. Consider $\xi_0=(1,\mathbb{J}x_0)\in\mathfrak{se}(2)$. Then, for any $(R,r)\in SE(2)$, we have $\hbox{Ad}_{(R,r)^{-1}}(1,\mathbb{J}x_0)=(1,R^{T}\mathbb{J}(x_0-r))$, and therefore, $||\hbox{Ad}_{(R,r)^{-1}}(1,\mathbb{J}x_0)||^2=2+|x_0-r|^2$. Next, assume we have an obstacle avoidance function $V_i^{0}:SE(2)\to\mathbb{R}$ for each agent $i\in\mathcal{N}$ which can be written as $V_i^{0}(R_i,r_i)=V(|r_i|)$ with $V:\mathbb{R}_{\geq 0}\to\mathbb{R}$. Note that under this assumption, $V_i^{0}(R_i,0)$ may be chosen arbitrarily. Then, $V_i^0$ is not $SE(2)$-invariant, but it is $SO(2)$ invariant, i.e., $V_i^{0}(R_{0}R_i,r_i)=V_i^{0}(R_i,r_i)$ for any $R_0\in SO(2)$. Note also that $SO(2)$ is the isotropy group for the coadjoint action, that is, $SO(2)\simeq\{g\in SE(2)\mid Ad_{g}(\xi_{0}) = \xi_{0}\}$, therefore, the obstacle avoidance potential functions $V_i^{0}$ are invariant under the left action of the isotropy group.
   
   In this situation, one can redefine the potential function $V_i^{0}$ to make it $SE(2)$-invariant as follows. Consider $x_0=0$, Then, $\xi_0=(R_0,x_0)=(-\mathbb{J},0)\in\mathfrak{se}(2)$. Then $\hbox{Ad}_{(R_i,r_i)^{-1}}\xi_0=2+|r_i|^2$. Hence, $\displaystyle{V_i^{0}(R_i,r_i)=V(|r_i|)=V\left(\sqrt{||\hbox{Ad}_{(R_i,r_i)^{-1}}\xi_0||^2-2}\right)}$. This gives a motivation to define an extended obstacle avoidance function $V_{i,0}^{\textnormal{ext}}:G\ltimes V^{*}\to\mathbb{R}$ with $G\ltimes V^{*}=SE(2)\ltimes\mathfrak{se}(2)$ as  $\displaystyle{V_{i,0}^{\textnormal{ext}}(g_i,\xi):=V\left(\sqrt{||\hbox{Ad}_{g_i^{-1}}\xi||^2-2}\right)}$.
   
   Note that the extended obstacle avoidance function possesses now an $SE(2)$-symmetry (i.e., $V_{i,0}^{\textnormal{ext}}$ is invariant under a left action of $SE(2)$) since $V_{i,0}^{\textnormal{ext}}\circ\tilde{\Phi}=V_{i,0}^{\textnormal{ext}}(L_g h,\hbox{Ad}_{h}\xi)=V_{i,0}^{\textnormal{ext}}(R_i,\xi_i)$, for any $h\in SE(2)$, with left action $\tilde{\Phi}$ given by \begin{align}
		\tilde{\Phi}: SE(2) \times (SE(2) \times \mathfrak{se}(2))&\rightarrow SE(2)\times \mathfrak{se}(2),\\
		(h,(g,\xi))&\mapsto(L_{g}(h),\hbox{Ad}_h(\xi)).\nonumber
	\end{align}

	\subsection{Problem Setting}
The problem under study consists on finding reduced necessary conditions for optimality in an optimal control problem for \eqref{kin-each-agent} (or equivalently \eqref{kin-each-agent-basis}). These solution curves should minimize a cost function and prevent collisions among agents while they should also avoid static obstacles in the workspace. 

%The main idea of the strategy chosen in this work to obtain the reduced optimality conditions is as follows. First note that the coupling between the agents provided by an inter-agent potential function $V_{ij}$ occurs, in most of the cases, in the position of the agents, that is, on the Lie group $G$. As in the motivating example, we consider a representation of $G$ on a dual vector space $V^{*}$ at each edge of $\mathcal{G}$, and we couple the neighbours of an agent in the unreduced Lagrangian for the optimal control problem, with a parameter depending on vectors in $V^{*}$ that are acted on by $G$. Hence, the neighbours are coupled with the positions of a given vector in $V^{*}$ that is rotated around by the Lie group action. The vector that is taken here for position coupling breaks the full Lie group symmetry in the cost function in our optimal control problem, and hence we apply the semi-direct product reduction theory \cite{HMR} to obtain the corresponding Euler-Poincar\'e system on the semi-direct product Lie algebra $\g\ltimes V^{*}$ at each node. This gives rise to a new system that finds no analogues in classical reduced-order models in optimal control of mechanical systems.

\textbf{Problem (collision and obstacle avoidance):} Find reduced optimality conditions on  $g(t)=(g_1(t),\dots,g_s(t))\in G^s$ and the controls $u(t)=(u_1(t),\dots,u_s(t))\in \g^s$ avoiding collision among the agents and obstacles (which will be defined shortly) in the workspace, and such that  $(g(t),u(t))\in G^s\times\g^s$ minimize the following cost function

\begin{equation}
\min_{(g,u)}\sum_{i=1}^s\int_0^T\Big(C_i(g_i(t),u_i(t))+V_i^{0}(g_i)+\frac{1}{2}\sum_{j\in \mathcal{N}_i}V_{ij}(g_i(t),g_j(t))\Big)dt \label{OCP}
\end{equation}
\sloppy subject to the kinematics  $\dot{g}_i(t)=T_{\overline{e}_i}L_{g_i(t)}(u_i(t))$, boundary conditions $g(0)=(g_1(0),\dots,g_n(0))=(g_1^0,\dots,g_s^0)$ and $g(T)=(g_1(T),\dots,g_s(T))=(g_1^T,\dots,g_s^T)$ with $T\in\mathbb{R}^{+}$ the final time, and under the following assumptions:

\begin{enumerate}%[(i)]
\item[(i)]{There is a left representation $\rho$ of $G$ on a vector space $V$.}% i.e., $\rho: G\to\mathrm{GL}(V)$ is a homomorphism.}
\item[(ii)]{$C_i: G\times\g\to\mathbb{R}$ are $G$-invariant functions for each  $i\in\mathcal{N}$ (under a suitable left action of $G$ on $G\times\g$, which will be defined shortly) and are also differentiable almost everywhere}.\label{a1}
\item[(iii)]{$V_{ij}: G\times G\to\mathbb{R}$ (collision avoidance potential functions) satisfying $V_{ij}=V_{ji}$ are $G$-invariant functions under $\Phi$, defined  by \begin{align}
		\Phi: G \times (G \times G)&\longrightarrow G\times G,\label{eq_phi}\\
		(g,(g_1, g_2))&\longmapsto(L_{g}(g_1),L_{g}(g_2)),\nonumber
	\end{align} i.e., $V_{ij}\circ\Phi_{g} = V_{ij}$, for any $g \in G$, that is,  $V_{ij}(L_{g}(g_i),L_g(g_j)) = V_{ij}(g_i,g_j)$, for any $(g_i,g_j)\in \mathcal{E}$, $j\in\mathcal{N}_i$ and they are also differentiable almost everywhere.}

\item[(iv)]{$V_{i}^{0}: G\to\mathbb{R}$ (obstacle avoidance potential functions) are not $G$-invariant functions and they are also differentiables almost everywhere, for $i\in\mathcal{N}$.}

\item[(v)]{The obstacle avoidance functions $V_{i}^{0}$ depend on a parameter $\alpha_{0}\in V^{*}$. Hence, we can define the extended potential function as $V_{i,0}^{\textnormal{ext}}: G\times V^{*}\to\mathbb{R}$, with $V_{i,0}^{\textnormal{ext}}(\cdot,\alpha_{0}) = V_{i}^{0}$, by making the parameter evolve - due to the group action - with initial value $\alpha_0$}.

\item[(vi)]{The extended obstacle avoidance functions are $G$-invariant under 
	$\tilde{\Phi}$, defined by \begin{align}
		\tilde{\Phi}: G \times (G \times V^{*})&\longrightarrow G\times V^{*},\label{eq_phi_tilde}\\
		(g,(h,\alpha))&\longmapsto(L_{g}(h),\rho_{g^{-1}}^{*}(\alpha)),\nonumber
	\end{align}  where $\rho_{g^{-1}}^{*}\in V^{*}$ is the adjoint of $\rho_{g^{-1}}\in V$, i.e., $V_{i,0}^{\textnormal{ext}}\circ\tilde{\Phi}_{g} = V_{i,0}^{\textnormal{ext}}$, for any $g \in G$, or $V_{i,0}^{\textnormal{ext}}(L_{g}(h),\rho_{g^{-1}}^{*}(\alpha)) = V_{i,0}^{\textnormal{ext}}(g,\alpha)$ where $\alpha\in V^{*}$}.

\item[(vii)]{The obstacle avoidance potential functions are invariant under the left action of the isotropy group
	\begin{equation}
		G_{\alpha_{0}}=\{g\in G\mid\rho_{g}^{*}(\alpha_{0}) = \alpha_{0}\}.
	\end{equation}\label{a6}}

\end{enumerate}

\vspace{-0.2cm}

 Note that $G\times\g$ is a trivial vector bundle over $G$ and define the left action of $G$ on $G\times\g$ as follows
\begin{align}
\Psi: G\times (G\times\g)&\longrightarrow (G\times\g),\nonumber\\
(g,(h,u))&\longmapsto(L_{g}(h),u).\label{eq_psi}
\end{align}

We further assume that each $C_i: G\times\g\to\mathbb{R}$ is $G$-invariant under \eqref{eq_psi}, i.e., $C_i\circ\Psi_{g} = C_i$, for any $g\in G$. In addition, each agent $i\in\mathcal{N}$ occupies a disk of radius $\overline{r}$ on $G$. This radius is chosen to be small enough so that all agents can be packed on $G$ and hence the potential functions are well defined and feasible for $\mathbf{d}(g_i(t),g_j(t))>2\overline{r}$ for all $t$, where $\mathbf{d}(\cdot,\cdot):G\times G\to\mathbb{R}$ denotes a distance function on $G$.

%\section{Reduction of necessary conditions}
\section{Euler-Poincar\'e reduction for optimal control with broken symmetries}\label{sec4}
We next study reduced optimality conditions for extrema for the OCP. We address the problem as a constrained variational problem and obtain the Euler-Poincar\'{e} equations that normal extremal must satisfy in Theorem \ref{main-theo} and Proposition \ref{split prop}.

The optimal control problem (\ref{OCP}) can be solved as a constrained variational problem by introducing the Lagrangian multipliers $\lambda_{g_i}=T^*_{g_i}L_{g_i^{-1}}(\lambda_i(t))\in T^*_{g
_i}G$ where $\lambda_i\in C^1([0,T],\g^*)$ into the cost functional. 

Consider the dual of the Lie algebra, $\g^*=\hbox{span}\{e^1,\dots,e^m,e^{m+1},\dots,e^n\}$, with basis the dual basis of $\g$, and then $\displaystyle{\lambda_i=\sum_{k=m+1}^n\lambda^i_k e^k}$, where $\lambda_k^i$ are the components of the vector $\lambda_i$ in the given basis of the Lie algebra $\mathfrak{g}$. Thus, we define the Lagrangian $L:G^s\times\g^s\times(T^*_{g_i}G)^s\to\mathbb{R}$ by \begin{equation}
        L(g,u,\lambda)=\sum_{i=1}^s \bigg[C_i(g_i(t),u_i(t))+\langle\lambda_{g_i},T_{\overline{e}_i}L_{g_i}u_i\rangle\\ +V^0_i(g_i(t))+V_{i}(g)\bigg],
\end{equation} where $V_{i}:G^{s}\rightarrow \mathbb{R}$, 
\begin{equation*}
    V_{i}(g) = \frac{1}{2}\sum_{j\in \mathcal{N}_i}V_{ij}(g_i(t),g_j(t))
\end{equation*}

By assumption (v), the obstacle avoidance potential functions $V^0_i:G\to\mathbb{R}$ depends on a parameter $\alpha_0\in V^{*}$, so we can extend it to $V_{i,0}^{\text{ext}}:G\times V^{*}\to\mathbb{R}$ by making the parameter evolve under the Lie group action with $\alpha(0)=\alpha_0$, and therefore we can consider the extended Lagrangian function on $G^s\times\g^s\times(T^*_{g_i}G)^s\times V^*$,
%(i.e., considering $\alpha(t)$ for all $t\in\mathbb{R}$)
\begin{equation*}
L_{\hbox{ext}}(g,u,\lambda,\alpha)=\sum_{i=1}^s \bigg[C_i(g_i(t),u_i(t)) + \langle\lambda_{g_i},T_{\overline{e}_i}L_{g_i}u_i\rangle
+\Vezero(g_i,\alpha_i)+V_{i}(g)\bigg]
\end{equation*}where $L_{\hbox{ext}}(g,u,\lambda,\alpha_0)=L(g,u,\lambda)$.

By assumptions (i) to (iv), and by taking advantage of the $G$-invariance of $C_i$, $V_{i,0}^{\text{ext}}$ and $V_{ij}$ (and so $V_i$)  we can define the reduced extended Lagrangian $\ell:G^{s-1}\times\g^s\times(\g^*)^s\times V^*\to\mathbb{R}$ by \begin{equation}\label{red-ext-Langrangian}
\begin{split}
\ell(g,u,\lambda,\alpha)=&\sum_{i=1}^s \bigg[C_i(u_i)+\langle\lambda_i,u_i\rangle+\Vezero(e_i,\alpha_i)+V_{i}(g)\bigg],\nonumber
\end{split}
\end{equation} \noindent defining $g_1$ to be the identity $\bar{e}_1$ on $G$, and where  $\ell(g,u,\lambda,\alpha)=L_{\text{ext}}(L_{g_i^{-1}}g,u,T^{*}_{g_i}L_{g_i^{-1}}\lambda_{g_i},\alpha)$ and $\alpha_i=\rho_{g_i}^*(\alpha)$. Note here the slight abuse of notation regarding the positions $g$. In the definition of the reduced Lagrangian $g\in G^{s-1}$ while in that of the Lagrangian $g\in G^s$. In this way, $\ell(g_2,\ldots,g_s,u,\lambda,\alpha)=L_{\text{ext}}(e,g_2,\ldots,g_s,u,\lambda, \alpha)$.

\begin{theorem}\label{main-theo}
For $s\geq 2$, a normal extrema for the OCP (\ref{OCP}) satisfies the following Euler-Poincar\'{e} equations

\begin{equation}\label{E-P eqs}
  		  \frac{d}{dt}\Big(\frac{\partial C_i}{\partial u_i}+\lambda_i\Big) =\ad^*_{u_i}\Big(\frac{\partial C_i}{\partial u_i}+\lambda_i\Big) +\textbf{J}_V\Big(\frac{\partial\Vezero}{\partial\alpha_i},\alpha_i\Big)+\Theta^{i}_{1}\sum_{k=1}^{s}T^*_{\overline{e}_i}L_{g_i}\bigg(\frac{\partial V_{k}}{\partial g_i}\bigg),
\end{equation}

\begin{equation}\label{E-P eqs param}
    \dot{\alpha_i}=\rho'^*_{u_i}(\alpha_i), \;\quad \alpha_i(0)=\rho^*_{g_i^0}(\alpha_0),
\end{equation}where $\textbf{J}_V:T^{*}V\simeq V\times V^{*}\to\g^{*}$ is the momentum map corresponding to the left action of $G$ on $V$ defined using the left representation $\rho$ of $G$ on $V$, and where $\Theta^{i}_{1}=0$ if $i=1$, otherwise $\Theta^{i}_{1}=1$.
\end{theorem}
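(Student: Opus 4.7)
The plan is to recast the OCP as an unconstrained variational problem on the extended configuration space, exploit the $G$-invariance that has been restored by introducing the advected parameter $\alpha$, and then apply semi-direct product Euler--Poincar\'e reduction to the resulting action functional. Concretely, I would begin with the extended Lagrangian $L_{\text{ext}}$ whose critical points (with respect to arbitrary variations of $g_i$, $u_i$, $\lambda_i$, and with $\alpha$ evolving by the representation from the prescribed initial value $\alpha_0$) recover exactly the normal extremals of the OCP, since variation with respect to $\lambda_{g_i}$ imposes the kinematic constraint \eqref{kin-each-agent} and the dependence on $\alpha_i = \rho^*_{g_i}(\alpha)$ along solutions reduces $V_{i,0}^{\text{ext}}$ to the original $V_i^0$.

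Next I would descend to the reduced Lagrangian $\ell$ on $G^{s-1}\times \g^s \times (\g^*)^s \times V^*$ using the left action that trivializes one of the group factors (taking $g_1 = \bar{e}_1$, which is the origin of the indicator $\Theta_1^i$). The admissible variations on the reduced space are the standard Euler--Poincar\'e variations
\begin{equation*}
\delta u_i = \dot{\eta}_i + \ad_{u_i}\eta_i, \qquad \delta g_i = T_{\bar{e}_i}L_{g_i}\eta_i \;(i\geq 2),\qquad \eta_1\equiv 0,
\end{equation*}
together with the induced advection variation
\begin{equation*}
\delta \alpha_i = -\rho'^*_{\eta_i}(\alpha_i),
\end{equation*}
where $\eta_i\in\g$ are arbitrary curves vanishing at the endpoints, and $\delta\lambda_i$ is free. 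I would then compute $\delta\!\int_0^T \ell\,dt$, group the terms pairing with $\eta_i$, and integrate by parts in $t$ on the term coming from $\delta u_i$.

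The term proportional to $\delta u_i$ produces, after integration by parts and use of the identity $\langle p, \ad_{u}\eta\rangle = \langle \ad^*_u p,\eta\rangle$ with $p = \partial C_i/\partial u_i + \lambda_i$, the left-hand side $\frac{d}{dt}p - \ad^*_{u_i}p$ of \eqref{E-P eqs}. The term proportional to $\delta\alpha_i$ is rewritten using the definition of the momentum map: $\langle \partial V_{i,0}^{\text{ext}}/\partial\alpha_i,\,-\rho'^*_{\eta_i}\alpha_i\rangle = -\langle \rho'_{\eta_i}(\partial V_{i,0}^{\text{ext}}/\partial\alpha_i), \alpha_i\rangle = -\langle \mathbf{J}_V(\partial V_{i,0}^{\text{ext}}/\partial\alpha_i,\alpha_i),\eta_i\rangle$, yielding the $\mathbf{J}_V$ contribution. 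Finally, the variation of the collision terms $V_k$ with respect to $g_i$ (which only appears for $i\geq 2$) is transported to the Lie algebra via $\delta g_i = T_{\bar{e}_i}L_{g_i}\eta_i$ using $T^*_{\bar{e}_i}L_{g_i}$, producing the last term of \eqref{E-P eqs} with the factor $\Theta^i_1$ encoding that $\eta_1\equiv 0$. The advection equation \eqref{E-P eqs param} comes directly from differentiating $\alpha_i = \rho^*_{g_i}(\alpha)$ along $\dot g_i = T_{\bar{e}_i}L_{g_i}u_i$.

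The main obstacle I anticipate is bookkeeping the asymmetric treatment of agent $1$: because we used the diagonal-like left action only to eliminate $g_1$, the set of admissible $\eta_i$ is not all of $\g^s$ but the subspace with $\eta_1=0$. One must verify carefully that the collision-avoidance contributions coming from $V_k$ for $k\neq i$ are collected with the correct sign via the symmetry $V_{ij}=V_{ji}$, so that $\sum_k \partial V_k/\partial g_i$ indeed appears in \eqref{E-P eqs}, and that the $i=1$ equation (where no $\eta_1$-variation tests the potential term) is consistent with applying the $i\geq 2$ equations together with the conservation properties implied by full $G$-invariance of $\sum_i V_i$. Once these endpoint and indexing issues are handled, arbitrariness of the $\eta_i$ for $i\geq 2$ and of $\eta_1$ acting only through the $u_1$ and $\alpha_1$ slots yields \eqref{E-P eqs} and \eqref{E-P eqs param}.
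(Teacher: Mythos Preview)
Your overall strategy matches the paper's: pass to the extended Lagrangian to restore symmetry, reduce to $\ell$, and compute the constrained variations term by term. Two concrete points, however, prevent the proposal from going through as written.

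First, the sign of the advection variation is wrong under the conventions fixed in the paper. Since $\alpha_i=\rho^{*}_{g_i}(\alpha_0)$ and $\rho^{*}$ is an anti-homomorphism (because $\rho$ is a left representation), perturbing $g_i\mapsto g_i\exp(\epsilon\eta_i)$ gives $\alpha_i^\epsilon=\rho^{*}_{\exp(\epsilon\eta_i)}(\alpha_i)$ and hence $\delta\alpha_i=+\rho'^{*}_{\eta_i}(\alpha_i)$, exactly parallel to the advection equation $\dot\alpha_i=\rho'^{*}_{u_i}(\alpha_i)$ in \eqref{E-P eqs param}. With your minus sign the $\mathbf{J}_V$ term would enter \eqref{E-P eqs} with the opposite sign.

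Second, your treatment of $\eta_1$ is internally inconsistent: you first impose $\eta_1\equiv 0$ and later invoke ``arbitrariness of $\eta_1$ acting only through the $u_1$ and $\alpha_1$ slots''. In the paper's reduced principle all $\eta_i$, $i=1,\ldots,s$, are \emph{free} curves in $\g$ vanishing at the endpoints; they are the generators of $\delta u_i$ and $\delta\alpha_i$, not body-frame versions of some $\delta g_i$. The indicator $\Theta^i_1$ does not come from constraining $\eta_1$, but simply from the fact that after fixing $g_1=\bar e_1$ the reduced collision potentials $V_k$ live on $G^{s-1}$ and carry no $\partial/\partial g_1$; thus the coefficient of $\eta_1$ lacks a collision contribution and the $i=1$ equation follows directly. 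No separate appeal to conservation laws is needed, and the paper does not make one.
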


\begin{proof}

Consider the variational principle 

\[\delta\int_0^TL_{ext}(g(t),u(t),\lambda(t),\alpha)dt=0,\]

\noindent which holds for variations of $g$, that vanishing at the endpoints, and $u$. Also, consider the constrained variational principle 

\begin{equation}\label{red-var-pr=0}
    \delta\int_0^T\ell(g(t),u(t),\lambda(t),\alpha(t))dt=0,
\end{equation} that holds for variations of $u$ and $\alpha$ with $\delta u=\dot{\eta}+\ad_u\eta$ and $\delta \alpha=\rho'^*_\eta(\alpha)$, where $\eta$ is a path of $\g^s$ that vanishes at the endpoints, i.e. $\eta(0)=\eta
(T)=0.$

The two variational principles are equivalent since the cost functions $C_i$ and the extended potential functions $\Vezero$ are $G$-invariant, i.e. $C_i\circ\Psi_g, \;\; \Vezero\circ\tilde{\Phi}=\Vezero$ and $\langle\lambda_{g_i},T_{\bar{e}_i}L_{g_i}u_i\rangle=\langle T^{*}_{g_i}L_{g_i^{-1}}\lambda_{g_i},u_i\rangle=\langle\lambda_i,u_i\rangle$. The variations $\delta g_i$ of $g_i$ induce and are induced by variations $\delta u_i=\dot{\eta}_i+\ad_{u_i}\eta_i$ with $\eta_i(0)=\eta_i(T)=0$, where $\eta_i=T_{g_i}L_{g_i}^{-1}(\delta g_i)$ and $\delta u_i=T_{g_i}L_{g_i^{-1}}(\dot{g}_i)$. Variations of $\alpha_i$ are given by $\delta\alpha_i=\rho'^*_{\eta_i}(\alpha_i)$.
So we have

\begin{equation} \label{red-var-pr}
    \begin{split}
        \delta\int_0^T\ell(g,u,\lambda,\alpha)dt=& \sum_{i=1}^s \int_0^T \Big\langle\frac{\partial C_i}{\partial u_i},\delta u_i\Big\rangle+ \langle\lambda_i,\delta u_i\rangle \\
        &+\Big\langle\frac{\partial\Vezero}{\partial\alpha_i},\delta\alpha_i\Big\rangle+ \sum_{k=2}^{s}\Big\langle\frac{\partial V_i}{\partial g_k},\delta g_k\Big\rangle dt. 
    \end{split}
\end{equation}

Using the variations of $u_i$, (i.e., $\delta u_i=\dot{\eta}_i+\ad_{u_i}\eta_i$), applying integration by parts and by the definition of the co-adjoint action the first two terms yield:

\[\int_0^T\Big\langle-\frac{d}{dt}\Big(\frac{\partial C_i}{\partial u_i}+\lambda_i\Big) + \ad^*_{u_i}\Big(\frac{\partial C_i}{\partial u_i}+\lambda_i\Big),\eta_i\Big\rangle dt.\]

From the variations $\delta\alpha_i=\rho'^*_{\eta_i}(\alpha_i)$ the third term gives

\begin{equation*}
\Big\langle\frac{\partial\Vezero}{\partial\alpha_i},\delta\alpha_i\Big\rangle=\Big\langle\frac{\partial\Vezero}{\partial\alpha_i},\rho'^*_{\eta_i}(\alpha_i)\Big\rangle=\Big\langle\alpha_i,\rho'_{\eta_i}\Big(\frac{\partial\Vezero}{\partial\alpha_i}\Big)\Big\rangle\\
=\Big\langle\textbf{J}_V\Big(\frac{\partial\Vezero}{\partial\alpha_i},\alpha_i\Big),\eta_i\Big\rangle.
\end{equation*}

Taking into account that $T(L_{g_i}\circ L_{g_i^{-1}})=TL_{g_i}\circ TL_{g_i^{-1}}$ is equivalent to the identity map on $TG_i$ and $\eta_i=T_{g_i}L_{g_i^{-1}}(\delta g_i)$, the forth term can be written as

\begin{equation*}
    \begin{split}
        \sum_{k=2}^{s}\Big\langle\frac{\partial V_{i}}{\partial g_k},\delta g_k\Big\rangle&=\sum_{k=2}^{s}\Big\langle\frac{\partial V_{i}}{\partial g_k},(T_{\overline{e}_k}L_{g_k}\circ T_{g_k}L_{g_k^{-1}})(\delta g_k)\Big\rangle=\sum_{k=2}^{s}\Big\langle\frac{\partial V_{i}}{\partial g_k},T_{\overline{e}_k}L_{g_k}(\eta_k)\Big\rangle\\
        &=\sum_{k=2}^{s}\Big\langle T^*_{\overline{e}_k}L_{g_k}\bigg(\frac{\partial V_{i}}{\partial g_k}\bigg),\eta_k\Big\rangle.\\
    \end{split}
\end{equation*}

Therefore, after performing a change of variables between indexes $i$ and $k$ in the fourth term, the above variational principle (\ref{red-var-pr=0}) yields
\begin{equation*}
    \begin{split}
        \frac{d}{dt}\Big(\frac{\partial C_i}{\partial u_i}+\lambda_i\Big) =& \ad^*_{u_i}\Big(\frac{\partial C_i}{\partial u_i}+\lambda_i\Big) +\textbf{J}_V\Big(\frac{\partial\Vezero}{\partial\alpha_i},\alpha_i\Big).
    \end{split}
\end{equation*} 
for $i=1$. Otherwise,
\begin{equation*}
        \frac{d}{dt}\Big(\frac{\partial C_i}{\partial u_i}+\lambda_i\Big) = \ad^*_{u_i}\Big(\frac{\partial C_i}{\partial u_i}+\lambda_i\Big) +\textbf{J}_V\Big(\frac{\partial\Vezero}{\partial\alpha_i},\alpha_i\Big)+\sum_{k=1}^{s}T^*_{\overline{e}_i}L_{g_i}\bigg(\frac{\partial V_{k}}{\partial g_i}\bigg).
\end{equation*} 

Finally, by taking the time derivative of $\alpha_i=\rho_{g_i}(\alpha_0)$, we have $\dot{\alpha}_i = \rho_{u_i}^{\prime*}(\alpha_i)$, together with $\alpha(0)=\rho^{*}_{g_{0}^{i}}(\alpha_{0}^{i}).$
% \hfill  $\blacksquare$
\end{proof}

Note that the above Euler-Poincar\'{e} equations (\ref{E-P eqs}) cannot, in fact, describe the motion properly because there are more unknowns than equations. In particular, observe that equations \eqref{E-P eqs} together with \eqref{kin-each-agent} (or equivalently \eqref{kin-each-agent-basis}), give rise to only two equations for the three unknown variables $u_i$, $\lambda_i$ and $g_i$. However, we provide an additional structure to the Lie algebra, $\g$, that allows one to decouple equations \eqref{E-P eqs} into two equation. The next Proposition describes this process.

\begin{proposition}\label{split prop}
	If the Lie algebra admits a decomposition $\g=\mathfrak{r}\oplus\mathfrak{s}$ where $\mathfrak{r}=\spn\{e_1,\dots,e_m\}$ and $\mathfrak{s}=\spn\{e_{m+1},\dots,e_s\}$ such that \begin{equation}\label{algebra decomp}
		[\mathfrak{s},\mathfrak{s}]\subseteq\mathfrak{s},\hspace{0.8cm} [\mathfrak{s},\mathfrak{r}]\subseteq\mathfrak{r}, \hspace{0.8cm} [\mathfrak{r},\mathfrak{r}]\subseteq\mathfrak{s},
	\end{equation}	\noindent then the Euler-Poincar\'{e} equations of motion (\ref{E-P eqs}) are given by the following equations: 	\begin{equation}
	\begin{split}
	\frac{d}{dt}\frac{\partial C_i}{\partial u_i}=&\ad^*_{u_i}\lambda_i\bigg|_{\mathfrak{r}^*}+\mathbf{J}_V\Big(\frac{\partial\Vezero}{\partial\alpha_i},\alpha_i\Big)\bigg|_{\mathfrak{r}^*}+\Theta^{i}_{1}\sum_{k=1}^{s}T^*_{\overline{e}_i}L_{g_i}\bigg(\frac{\partial V_{k}}{\partial g_i}\bigg)\bigg|_{\mathfrak{r}^*},\\ %\label{eqq-2} \\
 \dot{\lambda_i}=&\ad^*_{u_i}\frac{\partial C_i}{\partial u_i}\bigg|_{\mathfrak{s}^*}+\mathbf{J}_V\Big(\frac{\partial\Vezero}{\partial\alpha_i},\alpha_i\Big)\bigg|_{\mathfrak{s}^*}+\Theta^{i}_{1}\sum_{k=1}^{s}T^*_{\overline{e}_i}L_{g_i}\bigg(\frac{\partial V_{k}}{\partial g_i}\bigg)\bigg|_{\mathfrak{s}^*},\label{eqq-1}\\
	\end{split}
	\end{equation}	where $\big|_{\mathfrak{r}^*}$ and $\big|_{\mathfrak{s}^*}$ means that the expression of the corresponding factors in the last equations are written in terms of the dual space of the generators of the subspaces $\mathfrak{r}$ and $\mathfrak{s}$, respectively.
\end{proposition}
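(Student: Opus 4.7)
My plan is to take the Euler--Poincar\'{e} equations \eqref{E-P eqs} already established in Theorem \ref{main-theo} and simply project them onto $\mathfrak{r}^{*}$ and $\mathfrak{s}^{*}$, using the bracket hypothesis \eqref{algebra decomp} to control where $\ad^{*}_{u_{i}}$ sends each factor. First I would record the ``type'' of each variable appearing on the two sides: by construction the control $u_{i}(t)$ takes values in $\mathfrak{r}=\spn\{e_{1},\dots,e_{m}\}$, the multiplier $\lambda_{i}(t)=\sum_{k=m+1}^{n}\lambda_{k}^{i}e^{k}$ lies in $\mathfrak{s}^{*}$ (since Lagrange multipliers are introduced only for the unactuated directions), and $\partial C_{i}/\partial u_{i}\in\mathfrak{r}^{*}$ because $C_{i}$ depends only on the controlled components of $u_{i}$. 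Consequently the left-hand side $\tfrac{d}{dt}\bigl(\partial C_{i}/\partial u_{i}+\lambda_{i}\bigr)$ of \eqref{E-P eqs} splits canonically: its $\mathfrak{r}^{*}$-component is $\tfrac{d}{dt}\tfrac{\partial C_{i}}{\partial u_{i}}$ and its $\mathfrak{s}^{*}$-component is $\dot{\lambda}_{i}$.

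The key structural step is to use \eqref{algebra decomp} to determine how $\ad^{*}_{\xi}$, for $\xi\in\mathfrak{r}$, acts on the two dual factors. If $\eta\in\mathfrak{r}$ then $\ad_{\xi}\eta=[\xi,\eta]\in[\mathfrak{r},\mathfrak{r}]\subseteq\mathfrak{s}$, whereas if $\eta\in\mathfrak{s}$ then $\ad_{\xi}\eta\in[\mathfrak{r},\mathfrak{s}]\subseteq\mathfrak{r}$ (using $[\mathfrak{r},\mathfrak{s}]=-[\mathfrak{s},\mathfrak{r}]\subseteq\mathfrak{r}$). Dualising through the defining relation $\langle\ad^{*}_{\xi}\alpha,\eta\rangle=\langle\alpha,\ad_{\xi}\eta\rangle$ then shows that $\ad^{*}_{u_{i}}(\mathfrak{r}^{*})\subseteq\mathfrak{s}^{*}$ and $\ad^{*}_{u_{i}}(\mathfrak{s}^{*})\subseteq\mathfrak{r}^{*}$. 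Applied to \eqref{E-P eqs} this means that $\ad^{*}_{u_{i}}\bigl(\partial C_{i}/\partial u_{i}\bigr)$ contributes only to the $\mathfrak{s}^{*}$-projection of the right-hand side, while $\ad^{*}_{u_{i}}\lambda_{i}$ contributes only to the $\mathfrak{r}^{*}$-projection; in particular the cross-terms vanish automatically and there is no need to invoke further hypotheses.

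Finally I would project \eqref{E-P eqs} onto $\mathfrak{r}^{*}$ and $\mathfrak{s}^{*}$ and read off the two equations of \eqref{eqq-1}. The momentum-map term $\mathbf{J}_{V}\bigl(\partial\Vezero/\partial\alpha_{i},\alpha_{i}\bigr)$ and the coupling term $\Theta^{i}_{1}\sum_{k}T^{*}_{\bar{e}_{i}}L_{g_{i}}\bigl(\partial V_{k}/\partial g_{i}\bigr)$ have no preferred location in $\mathfrak{g}^{*}=\mathfrak{r}^{*}\oplus\mathfrak{s}^{*}$, so they simply carry their $|_{\mathfrak{r}^{*}}$ and $|_{\mathfrak{s}^{*}}$ components into each of the two resulting equations, which is exactly how they appear in the statement. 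I expect the only mildly delicate point to be the a priori justification that $\lambda_{i}\in\mathfrak{s}^{*}$ and $\partial C_{i}/\partial u_{i}\in\mathfrak{r}^{*}$ along solutions; this is essentially built into the definition of the constrained variational problem (the multipliers enforce only the unactuated directions of the kinematics and $C_{i}$ is a function of the actuated components of $u_{i}$), but it is the structural input without which the projection would not decouple so cleanly.
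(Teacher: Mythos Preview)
Your proposal is correct and follows essentially the same route as the paper's own proof: project \eqref{E-P eqs} onto $\mathfrak{r}^{*}$ and $\mathfrak{s}^{*}$, use \eqref{algebra decomp} to deduce $\ad^{*}_{u_{i}}(\mathfrak{r}^{*})\subseteq\mathfrak{s}^{*}$ and $\ad^{*}_{u_{i}}(\mathfrak{s}^{*})\subseteq\mathfrak{r}^{*}$ (since $u_{i}\in\mathfrak{r}$), invoke that $\partial C_{i}/\partial u_{i}\in\mathfrak{r}^{*}$ and $\lambda_{i}\in\mathfrak{s}^{*}$ by construction, and let the remaining $\mathfrak{g}^{*}$-valued terms split generically. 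The paper's proof is slightly terser (it simply asserts the four $\ad^{*}$ inclusions and that $\partial C_{i}/\partial u_{i}\in\mathfrak{r}^{*}$, $\lambda_{i}\in\mathfrak{s}^{*}$ ``by definition''), but the argument is the same.
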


\begin{remark}
Note that semisimple Lie algebras admit a Cartan decomposition, i.e., if $\mathfrak{g}$ is semisimple, then $\mathfrak{g} = \mathfrak{r}\oplus\mathfrak{s}$ such that $
[\mathfrak{r},\mathfrak{r}]\subseteq\mathfrak{s}, \hspace{5pt} [\mathfrak{s},\mathfrak{r}]\subseteq\mathfrak{r}, \hspace{5pt} [\mathfrak{s},\mathfrak{s}]\subseteq\mathfrak{s},\nonumber$ where $\mathfrak{r} = \{x\in\mathfrak{g}\mid\theta(x) = -x\}$ is the $-1$ eigenspace of the Cartan involution $\theta$ and $\mathfrak{s} = \{x\in\mathfrak{g}\mid\theta(x) = x\}$ is the $+1$ eigenspace of the Cartan involution $\theta$. In addition, the Killing form is positive definite on $\mathfrak{r}$ and negative definite on $\mathfrak{s}$ (see, e.g., \cite{B}). So, connected semisimple Lie groups are potential candidates that satisfy the assumption of Proposition \ref{split prop}. Conversely, a Cartan decomposition determines a Cartan involution $\theta$ (see, e.g., \cite{Knapp2002}). In particular the proposed  decomposition for the Lie algebra is not restrictive in the sense that the usual manifolds/work-spaces used in applications as $SO(n)$ and $SE(n)$ allow such a decomposition. \hfill$\diamond$
\end{remark}
%For the proof we follow \cite{BCGO}
\begin{proof}
Given $\g=\mathfrak{r}\oplus\mathfrak{s}$ we get $\g^*=\mathfrak{r}^*\oplus\mathfrak{s}^*$, where  $\mathfrak{r}^*=\spn\{e^1,\dots,e^m\}$ and $\mathfrak{s}^*=\spn\{e^{m+1},\dots,e^s\}$. Thus, from (\ref{algebra decomp}) we have that $\ad^*_{\mathfrak{s}}\mathfrak{s}^*\subseteq\mathfrak{s}^*, \; \ad^*_{\mathfrak{s}}\mathfrak{r}^*\subseteq\mathfrak{r}^*, \; \ad^*_{\mathfrak{r}}\mathfrak{s}^*\subseteq\mathfrak{r}^*, \; \ad^*_{\mathfrak{r}}\mathfrak{r}^*\subseteq\mathfrak{s}^*$ and given that $\frac{\partial C_i}{\partial u_i}\in\mathfrak{r}^*$ and $\lambda_i\in\mathfrak{s}^*$ by definition we conclude that $\ad^*_{u_i}\frac{\partial C_i}{\partial u_i}\in\mathfrak{s}^*$ and $\ad^*_{u_i}\lambda_i\in\mathfrak{r}^*.$ Also, $\mathbf{J}_V\Big(\frac{\partial\Vezero}{\partial\alpha_i},\alpha_i\Big) \in\g^*$ and $\sum_{k=1}^{s}T^*_{\overline{e}_i}L_{g_i}\bigg(\frac{\partial V_{k}}{\partial g_i}\bigg)\in\g^*$ hence, they have a decomposition into $\mathfrak{r}^*$ and $\mathfrak{s}^*$. Thus, the equations (\ref{E-P eqs}) split into the following equations
	
	\begin{equation*}
	\begin{split}
	\frac{d}{dt}\frac{\partial C_i}{\partial u_i}=&\ad^*_{u_i}\lambda_i\bigg|_{\mathfrak{r}^*}+\mathbf{J}_V\Big(\frac{\partial\Vezero}{\partial\alpha_i},\alpha_i\Big)\bigg|_{\mathfrak{r}^*}+\sum_{k=1}^{s}T^*_{\overline{e}_i}L_{g_i}\bigg(\frac{\partial V_{k}}{\partial g_i}\bigg)\bigg|_{\mathfrak{r}^*}, \\
	\dot{\lambda_i}=&\ad^*_{u_i}\frac{\partial C_i}{\partial u_i}\bigg|_{\mathfrak{s}^*}+\mathbf{J}_V\Big(\frac{\partial\Vezero}{\partial\alpha_i},\alpha_i\Big)\bigg|_{\mathfrak{s}^*}+\sum_{k=1}^{s}T^*_{\overline{e}_i}L_{g_i}\bigg(\frac{\partial V_{k}}{\partial g_i}\bigg)\bigg|_{\mathfrak{s}^*}.
	\end{split}
	\end{equation*}
 %\hfill $\blacksquare$
 \end{proof}

\begin{remark}
For the initial value problem guaranteeing a solution for the previous system of equations, we must solve the equations  with the initial condition $u_i(0)=T_{g(0)}L_{g^{-1}(0)}(\dot{g}_i(0))$ and the kinematic equation  $\dot{g}_i(t)=T_{\overline{e}_i}L_{g_i(t)}(u_i(t))$  with $g(0)=(g_1(0),\dots,g_s(0))$, which is a differential equation with time-dependent coefficients.  \hfill$\diamond$% This can be seen by the independence of terms of the Lagrangian from certain variables.Note also that the invariance of the Lagrangian under the action of a Lie group reveals the symmetry of the system, reduces the number of equations need to be solved and simplifies them and the calculations as well. 
\end{remark} 

\section{Discrete-time reduced necessary conditions}\label{sec5}

In this section we study the discrete-time reduction by symmetries for necessary conditions in the collision and obstacle avoidance optimal control problem. The goal is to construct a variational integrator based on the discretization of the augmented cost functional. Such integrator inherits discrete-time symmetries from its continuous counterpart and generates a well-defined (local) flow for reduced necessary conditions characterizing (local) extrema in the optimal control problem.

\subsection{Trajectory discretization}\label{sec6.1}
Given the set $\mathcal{T}=\{t_k\in\mathbb{R}^{+},\, t_{k}=kh\mid k=0,\ldots,N\}$, $Nh=T$, with $T$ fixed (recall that $T\in\mathbb{R}^{+}$ is the end point of the cost functional - see for instance equation \eqref{OCP}), a  discrete trajectory for the agent $i$ is determined by a set of $N+1$ points equally spaced in time, $g^{0:N}_i=\{g^0_i,\ldots,g^{N}_i\}$, where $g^k_i\simeq g_i(kh)\in G$, and $h=T/N$ is the time step. The path between two adjacent points $g_i^k$ and $g_i^{k+1}$ must be given by a curve lying on the Lie group $G$. To construct such a curve we make use of a retraction map $\mathcal{R}:\mathfrak{g}\to G$.
\begin{definition}\label{retractionmap}
A \textit{retraction map} $\mathcal{R}:\mathfrak{g}\to G$ is an 
analytic local diffeomorphism assigning a neighborhood $\mathcal{O}\subset\mathfrak{g}$
of $0\in {\mathfrak g}$ to a neighborhood of the identity $\overline{e}\in G$.
\end{definition}
%As a consequence, it is possible to deduce that
%$\mathcal{R}(\xi)\mathcal{R}(-\xi)=\overline{e}$ for all $\xi \in \mathfrak{g}$.

\begin{figure}[htb!] 
\begin{center}
\begin{tikzpicture}[scale=0.8]
\draw[very thick] (4,0) -- (7,0)--(8.5,2)--(5.5,2)--(4,0);
\draw [orange]  (5.75,0.9) circle (17pt);
\filldraw [blue]  (5.75,0.9) circle (1pt) node[below]{$0$};
\filldraw [red] (5.5,1.1) circle(1pt);
\filldraw [red](5.5,1.2)node[right]{$hu^k$};
\draw (6.5,0.55) node[right]{$\mathcal{O}$};
\draw (5.5,0) node[below]{$\mathfrak{g}$};
\draw[very thick] (9,0) .. controls (9.2,0.6) and (10,1.8) .. (11,2);
\draw[very thick] (9,0) .. controls (9.3,0.3) and (10.7,0.3) .. (11,0);
\draw[very thick] (11,0) .. controls (11.5,0.6) and (12.7,1.8) .. (13,2);
\draw[very thick] (11,2) .. controls (11.8,2.2) and (13.3,2.2) .. (13,2);
\draw [orange]  (10.75,1) circle (20pt);
\filldraw [blue]  (10.75,0.9) circle (1pt) node[below]{$\overline{e}$};
\filldraw [red] (10.4,1.1) circle(1pt); 
\filldraw [red](10.3,1.2)node[right]{$\xi^{k,k+1}$};
\draw (11.2,1.55) node[right]{$\mathcal{R}\left(\mathcal{O}\right)$};
\draw (10.5,0) node[below]{$G$};
\draw [->]  (5.75,0.9) .. controls (7,-0.1) and (7,-0.1) .. (10.75,0.9);
\draw [<-](5.5,1.1) .. controls (7,3) and (7,3) .. (10.45,1.1);
\draw (7,3) node [below]{$\mathcal{R}^{-1}$};
\draw (8,0.2) node [below]{$\mathcal{R}$};
\end{tikzpicture}\caption{Retraction map.}\label{retractionfigure}
\end{center}
\end{figure}
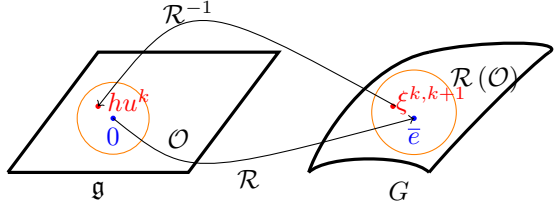

The retraction map  (see Figure \ref{retractionfigure}) is used to express small discrete changes in the
group configuration through unique Lie algebra elements given by $u^k=\mathcal{R}^{-1}((g^{k})^{-1}g^{k+1})/h$, where
$u^k\in\mathfrak{g}$ (see \cite{Rabee, KobM} for
further details). That is, if $u^{k}$ were regarded as an
average velocity between $g^{k}$ and $g^{k+1}$, then $\mathcal{R}$ is an
approximation to the integral flow of the dynamics. The difference
$\xi^{k,k+1}:=(g^{k})^{-1}\,g^{k+1}\in G$, which is an element of a nonlinear
space, can now be represented by a vector space element $u^{k}$. For the derivation of the
discrete equations of motion, the {\it right trivialized}
tangent retraction map will be used. It is the  function
$d\mathcal{R}:\mathfrak{g}\times\mathfrak{g}\rightarrow\mathfrak{g}$ given by \begin{equation}\label{righttrivialization}
T\mathcal{R}(\xi)\cdot\eta=TR_{\mathcal{R}(\xi)}d\mathcal{R}_{\xi}(\eta),
\end{equation}
where $\eta\in\mathfrak{g}$ and $R:G\to G$ the right translation on $G$ (see \cite{Rabee, KobM} for the derivation of such a map). Here we use the following notation,
$d\mathcal{R}_\xi:=d\mathcal{R}(\xi):\mathfrak{g}\rightarrow\mathfrak{g}.$ The
function $d\mathcal{R}$ is linear, but only on one argument. %From this

\begin{remark}
The natural choice of a retraction map is the exponential map at
the identity $\overline{e}$ of the group $G,$ $\e_{\overline{e}}:\mathfrak{g}\rightarrow
G$. Recall that, for a finite-dimensional Lie group, $\e_{\overline{e}}$ is
locally a diffeomorphism and gives rise to a natural chart
\cite{M-EP}. Then, there exists a neighborhood $U$ of $\overline{e}\in G$ such
that $\e_{\overline{e}}^{-1}:U\rightarrow \e_{\overline{e}}^{-1}(U)$ is a local
$\mathcal{C}^{\infty}-$diffeomorphism. A chart at $g\in G$ is given
by $\Psi_{g}=\e_{\overline{e}}^{-1}\circ L_{g^{-1}}.$

In general, it is not easy to work with the exponential map since the differential of the exponential map involves power series expansions with iterated Lie-brackets. In
consequence it will be useful to use a different retraction map.
More concretely, the Cayley map, which is usually used in numerical integration with matrix Lie-groups configurations (see \cite{Rabee, KobM} for further
details), will provide to us a proper framework in the application shown
in the next Section.
\end{remark}
\subsection{Discretization of the Lagrangian function}

Next, we consider a discrete cost function to construct variational integrators in
the same way as in discrete mechanics \cite{mawest}. In other words, consider the continuous-time Lagrangian $\mathbf{L}:G^s\times\mathfrak{g}^s\to\mathbb{R}$ defined by the cost functional \eqref{OCP}, that is, \begin{equation*}\mathbf{L}(g,u)=\sum_{i=1}^s\Big(C_i(g_i(t),u_i(t))+V_i^{0}(g_i)+\frac{1}{2}\sum_{j\in \mathcal{N}_i}V_{ij}(g_i(t),g_j(t))\Big),\end{equation*} and for a given $h>0$ we define the discrete Lagrangian
$L_d:G^s\times \mathfrak{g}^s\to\mathbb{R}$ as an approximation of the cost functional \eqref{OCP} 
along each discrete segment between $g^k$ and $g^{k+1}$, that is, $$L_{d}(g^{k},u^{k}) =h\mathbf{L}\left(\kappa(g^k,u^{k}),\zeta(g^k,u^{k})\right) \simeq \int_{kh}^{(k+1)h}\mathbf{L}(g,u)\,dt, $$
where $\kappa$ and $\zeta$ are functions of
$(g^k,u^{k})\in G^s\times \mathfrak{g}^s$ which approximate the
configuration $g(t)$ and the control input $u(t)$, respectively. In the following we consider a discretization given by
\begin{equation}\label{discretelagrangian}
    L_d(g^k,u^{k})=h\sum_{i=1}^{s}\left(C_{i}(g_{i}^k,u_{i}^k) +V_{i}^{0}(g_{i}^{k}) + V_{i}(g^{k})\right).
\end{equation}

%\begin{remark}
%Different discretizations of the potential $V_{i}$ in \eqref{discretelagrangian}, as for instance  $$\displaystyle{\frac{h}{2}V_{i}(g^{k+1})+\frac{h}{2}V_{i}(g^k)},$$  would lead to a second-order discretization of the discrete-time equations of motion with respect to the continuous-time equations. However, despite the discretization choice for the potential the local truncation error of  the discrete-time equations with respect to continuous-time ones does not change, since the order of the the equation for $u_i^k$ will remains the same.\hfill$\diamond$
%\end{remark}
%$$\displaystyle{h\,V_{ij}^{d}\lp\frac{\pi_i(g^{k+1})+\pi_i(g^k)}{2},\frac{\pi_j(g^{k+1})+\pi_j(g^k)}{2}\rp}$$ or

\subsection{Discrete-time optimal control problem and reduction of discrete-time necessary conditions for optimality}

Next, we are going to define the optimal control problem for discrete-time systems and derive a variational integrator for $L_{d}\colon G^s\times \mathfrak{g}^s\to\R$, in a similar fashion as the variational principle presented in Theorem \ref{main-theo}.

\textbf{Problem:} Consider the discrete-time optimal control problem for collision and obstacle avoidance of left-invariant multi agent control systems which is given by finding the discrete configurations $\{g^k\}_{k=0}^{N}=\{(g_1^k,\ldots,g_s^k)\}_{k=0}^{N}$ and discrete control inputs $\{u^k\}_{k=0}^{N}=\{(u_1^k,\ldots,u_s^k)\}_{k=0}^{N}$ minimizing the discrete cost functional
\begin{equation}\label{ocpdiscrete}
    \min_{(g^k, u^k)}\sum_{i=1}^{s}\sum_{k=0}^{N-1}h\left(C_i(g_{i}^k ,u_{i}^k) +     V_{i}^{0}(g_{i}^{k}) + V_{i}(g^{k}) \right)
\end{equation}
subject to $g_i^{k+1} =g_i^k\mathcal{R}(hu_i^k)$ (i.e., a first order approximation of equation \eqref{kin-each-agent}) with given boundary conditions
$g^0$ and $g^N$, where $h>0$ denotes the time step, $\mathcal{R}:\mathfrak{g}\to G$ is a retraction map, $u_i(0)$ and $u_i(T)$ are given, and each cost function $C_{i}:G\times\mathfrak{g}\to\mathbb{R}$, potential functions $V_{i}^{0}$ and $V_{i}$ satisfy properties (i) - (vii).  \hfill$\square$

The discrete-time optimal control problem \eqref{ocpdiscrete} can be considered as a discrete constrained variational problem by introducing the Lagrange multipliers $\mu_{i}^k\in \mathfrak{g}$ into the cost functional. Consider the augmented discrete Lagrangian $\mathcal{L}_{d}:G^s\times G^s\times \mathfrak{g}^s\times (\mathfrak{g}^{*})^s\to\mathbb{R}$ given by
\begin{equation}
    \begin{split}
    \mathcal{L}_d (g^k,g^{k+1},u^k,&\mu^k)=  h\sum_{i=1}^{s}\left(C_{i}(g_{i}^k,u_{i}^k) +V_{i}^{0}(g_{i}^{k}) \right. \\
     &\quad +\left. V_{i}(g^{k}) +\Big{\langle}\mu^k_i,\frac{1}{h}\mathcal{R}^{-1}(\xi^{k,k+1}_i)-u_{i}^k\Big{\rangle} \right)
\end{split}
\end{equation}
where $\xi^{k,k+1}_i=(g_i^k)^{-1}g_i^{k+1}\in G$, for each $i\in\mathcal{N}$. Note that the last term in the augmented Lagrangian represents a first-order discretization of the kinematic constraint paired with a Lagrange multiplier in analogy with the variational principle presented in Section \ref{sec4}.

Now, extending the potential $V_{i}^{0}$ we obtain an extended Lagrangian $\mathcal{L}_{ext,d}:G^s\times G^s\times\mathfrak{g}^s\times(\mathfrak{g}^{*})^s\times (V^{*})^{s}\to\mathbb{R}$ given by 
\begin{equation}
    \begin{split}
    \mathcal{L}_{ext,d} & (g^k,g^{k+1},u^k,\mu^k, \alpha)=  h\sum_{i=1}^{s}\left(C_{i}(g_{i}^k,u_{i}^k) \right. \\ + & V_{i}^{0,ext}(g_{i}^{k},\alpha_i) 
     +  \left. V_{i}(g^{k}) +\Big{\langle}\mu^k_i,\frac{1}{h}\mathcal{R}^{-1}(\xi^{k,k+1}_i)-u_{i}^k\Big{\rangle} \right),
\end{split}
\end{equation}
which is invariant under the left action of $G$ on $G^s\times G^s\times \mathfrak{g}^s \times (\mathfrak{g}^{*})^s \times (V^{*})^s$ given by 
$\tilde{\Phi}_{g}(h_{1},h_{2},u,\mu,\alpha)=(gh_{1},gh_{2},u,\mu,\rho_{g^{-1}}^{*}(\alpha))$ by assumption (vi). In particular, under assumptions (iv)-(vi), the extended discrete Lagrangian $\mathcal{L}_{ext,d}(\cdot,\cdot,\cdot,\cdot,\alpha_0)=:\mathcal{L}_{ext,d,\alpha_0}$ is $G_{\alpha_0}$-invariant under $\tilde{\Phi}_g$.

The following result (Theorem \ref{theorem2}) derives a variational integration for reduced optimality conditions for the discrete-time optimal control \eqref{ocpdiscrete} in analogy with the results presented in Section \ref{sec4}. To derive the numerical algorithm, first we need the following result describing variations for elements on the Lie algebra and its relation with variations on the Lie group by using the retraction map, in addition to a property used in the proof of Theorem \ref{theorem2}.

\begin{lemma}[adapted from \cite{Rabee, KobM}] \label{lemmadiscrete}The following properties hold 
\begin{enumerate}
\item[(i)] $$\frac{1}{h}\delta\left(\mathcal{R}^{-1}(\xi^{k,k+1})\right)=\frac{1}{h}d\mathcal{R}^{-1}_{(hu^k)}(-\eta^k+\hbox{Ad}_{\mathcal{R}(hu^k)}\eta^{k+1}),$$ where $\eta^{k}=T_{g^k}L_{(g^k)^{-1}}(\delta g^k)\in\mathfrak{g}^s$ and $\xi^{k,k+1}=(g^k)^{-1}g^{k+1}$.
\item[(ii)] $$(d\mathcal{R}^{-1}_{(-hu^k)})^{*}\mu^k=\Ad^{*}_{\mathcal{R}(hu^k)}(d\mathcal{R}^{-1}_{(hu^k)})^{*}\mu^{k}$$ where $\mu^k\in(\mathfrak{g}^{*})^s$ and $d\mathcal{R}^{-1}$ is the inverse right trivialized  tangent of the retraction map $\mathcal{R}$ defined in \eqref{righttrivialization}. 
\end{enumerate}
\end{lemma}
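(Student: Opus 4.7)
The plan is to prove both identities by direct computation using the defining relation of the right trivialized tangent retraction map \eqref{righttrivialization}. Both claims reduce to identities in the Lie algebra/group coming from manipulating $\mathcal{R}$ and its derivative, and the key tool throughout is that $TR_{\mathcal{R}(\xi)}$ is an invertible linear map allowing us to cancel right translations.

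For part (i), I would start from $\mathcal{R}(hu^k) = \xi^{k,k+1} = (g^k)^{-1}g^{k+1}$ and compute the variation of both sides. On the left, by \eqref{righttrivialization},
\begin{equation*}
\delta \mathcal{R}(hu^k) = T\mathcal{R}(hu^k)\bigl(\delta(hu^k)\bigr) = TR_{\mathcal{R}(hu^k)}\, d\mathcal{R}_{hu^k}\bigl(\delta(hu^k)\bigr).
\end{equation*}
On the right, using the identity $\delta(g^{-1}) = -g^{-1}(\delta g)g^{-1}$ together with $\delta g^k = T_{\overline{e}}L_{g^k}\eta^k$, I would obtain, after simplification,
\begin{equation*}
\delta\bigl((g^k)^{-1}g^{k+1}\bigr) = -\eta^k\,\mathcal{R}(hu^k) + \mathcal{R}(hu^k)\,\eta^{k+1} = TR_{\mathcal{R}(hu^k)}\bigl(-\eta^k + \mathrm{Ad}_{\mathcal{R}(hu^k)}\eta^{k+1}\bigr),
\end{equation*}
where the last equality uses the standard relation $TL_g = TR_g\circ \mathrm{Ad}_g$ on Lie algebra elements. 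Equating the two expressions, canceling $TR_{\mathcal{R}(hu^k)}$, applying $d\mathcal{R}^{-1}_{hu^k}$ and dividing by $h$ yields (i).

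For part (ii), I would exploit the symmetry property $\mathcal{R}(-\xi) = \mathcal{R}(\xi)^{-1}$ (satisfied by both the exponential map and the Cayley map discussed in the paper) and differentiate the identity $\mathcal{R}(\xi)\mathcal{R}(-\xi) = \overline{e}$ in the direction $\eta\in\mathfrak{g}$. Using \eqref{righttrivialization} on each factor and simplifying the resulting product leads to
\begin{equation*}
d\mathcal{R}_{\xi}(\eta) = \mathrm{Ad}_{\mathcal{R}(\xi)}\, d\mathcal{R}_{-\xi}(\eta),
\end{equation*}
so that $d\mathcal{R}^{-1}_{-\xi} = d\mathcal{R}^{-1}_{\xi}\circ \mathrm{Ad}_{\mathcal{R}(\xi)}$. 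Taking duals and setting $\xi = hu^k$ gives (ii).

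The main obstacle is bookkeeping rather than conceptual: keeping the left- versus right-translation conventions straight (since the paper uses the right-trivialized tangent in \eqref{righttrivialization} but left-invariant vector fields for the kinematics \eqref{kin-each-agent}), and implicitly invoking the symmetric retraction hypothesis $\mathcal{R}(-\xi) = \mathcal{R}(\xi)^{-1}$ that is needed for (ii). This hypothesis is standard for exp and Cayley, which are the two retractions the paper considers, so the identity will apply in the intended setting. Both statements are essentially adaptations of standard formulas from \cite{Rabee, KobM}, and the proof is a short chase of definitions once these conventions are fixed.
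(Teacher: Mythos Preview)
Your proof is correct. The paper itself does not supply a proof of this lemma: it is stated as ``adapted from \cite{Rabee, KobM}'' and used as a black box in the proof of Theorem~\ref{theorem2}. What you have written is exactly the standard derivation from those references, so there is no discrepancy to report.

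One small remark worth making explicit: your argument for (ii) relies on the symmetry $\mathcal{R}(-\xi)=\mathcal{R}(\xi)^{-1}$, which is \emph{not} part of Definition~\ref{retractionmap} as stated in the paper. You flag this yourself, and you are right that it holds for the exponential and the Cayley map (the only retractions the paper actually uses, cf.\ the appendix identity $\Cay(-\omega)=\Cay(\omega)^{-1}$), so the lemma is valid in the intended setting; but strictly speaking it is an extra hypothesis beyond the paper's definition of a retraction.
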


\begin{theorem}\label{theorem2}
Under assumptions (i)-(vii), a normal extrema for the discrete-time optimal control problem \eqref{ocpdiscrete} satisfies the following equations
\begin{align}
g_i^{k+1} =&g_i^k\mathcal{R}(hu_i^k),\label{reconstructiondiscrete}\\
\left(d\mathcal{R}^{-1}_{(hu_i^k)}\right)^{*}\mu_i^k=&\left(d\mathcal{R}^{-1}_{(-hu_i^{k-1})}\right)^{*}\mu_i^{k-1} +
\mathbf{J}_V\left(h\frac{\partial V^{0,ext}_{i}}{\partial \bar\alpha_i^k},  \bar\alpha_i^k\right)\nonumber \\ & + h\Theta_{1}^{i}\sum_{l=1}^{s} T_{\overline{e}_{i}}^{*}L_{g_i^k}\left(\frac{\partial V_{l}}{\partial g_i^k}\right),\label{eqlpdiscrete}\\
\mu_i^k=&\left(\frac{\partial C_i}{\partial u^k_i}\right),\label{momentumdiscrete}\\
\bar\alpha_i^{k+1}=&\rho^{*}_{\mathcal{R}(hu_i^k)}(\bar\alpha_i^k),\;\; \bar\alpha_i^0=\rho_{g_i^0}^{*}(\alpha_i^0),\label{alphadiscrete}
\end{align} for $k=1,\ldots,N-1$; where $\Theta^{i}_{1}=0$ if $i=1$,  otherwise $\Theta^{i}_{1}=1$.
\end{theorem}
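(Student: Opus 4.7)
The proof mirrors the variational argument of Theorem \ref{main-theo}, adapted to the discrete setting. The plan is to form the augmented discrete action
\[
S_d \;=\; \sum_{k=0}^{N-1}\mathcal{L}_{ext,d}\!\left(g^{k},g^{k+1},u^{k},\mu^{k},\bar\alpha^{k}\right),
\]
with $\bar\alpha_i^{k}:=\rho^{*}_{g_i^{k}}(\alpha_i^{0})$, and to extract the stated relations by requiring $\delta S_d=0$ under independent discrete variations of $\mu_i^{k}$, $u_i^{k}$ and $g_i^{k}$ (with $\delta g^{0}=\delta g^{N}=0$). Variations of $g_i^{k}$ are parametrized in the Lie algebra via $\eta_i^{k}:=T_{g_i^k}L_{(g_i^k)^{-1}}(\delta g_i^{k})\in\mathfrak{g}$, which vanish at $k=0,N$, and by the chain rule these induce $\delta\bar\alpha_i^{k}=\rho'^{*}_{\eta_i^{k}}(\bar\alpha_i^{k})$. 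As in the continuous proof, the semidirect-product reduction viewpoint allows $\eta_1^{k}$ to be treated as an independent variation even though $g_1^{k}$ is held at the identity in the reduced configuration.

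I would first dispose of the routine variations: freeing $\mu_i^{k}$ enforces $u_i^{k}=\tfrac{1}{h}\mathcal{R}^{-1}(\xi_i^{k,k+1})$, i.e., the reconstruction \eqref{reconstructiondiscrete}, while freeing $u_i^{k}$ gives \eqref{momentumdiscrete}. The substantive work is the derivation of \eqref{eqlpdiscrete} from the $\eta_i^{k}$ variations. For the kinematic coupling term $\langle\mu_i^{k},\tfrac{1}{h}\mathcal{R}^{-1}(\xi_i^{k,k+1})\rangle$, Lemma \ref{lemmadiscrete}(i) gives
\[
\delta\!\left(\tfrac{1}{h}\mathcal{R}^{-1}(\xi_i^{k,k+1})\right)
=\tfrac{1}{h}\,d\mathcal{R}^{-1}_{(hu_i^{k})}\!\left(-\eta_i^{k}+\hbox{Ad}_{\mathcal{R}(hu_i^{k})}\eta_i^{k+1}\right).
\]
Dualizing by moving $\mu_i^{k}$ into the pairing and applying Lemma \ref{lemmadiscrete}(ii) to rewrite $\hbox{Ad}^{*}_{\mathcal{R}(hu_i^{k})}(d\mathcal{R}^{-1}_{(hu_i^{k})})^{*}\mu_i^{k}$ as $(d\mathcal{R}^{-1}_{(-hu_i^{k})})^{*}\mu_i^{k}$, followed by a discrete summation by parts (shift $k+1\mapsto k$, using the endpoint vanishing of $\eta_i^{k}$), yields the coefficient $-(d\mathcal{R}^{-1}_{(hu_i^{k})})^{*}\mu_i^{k}+(d\mathcal{R}^{-1}_{(-hu_i^{k-1})})^{*}\mu_i^{k-1}$ of $\eta_i^{k}$ for $k=1,\dots,N-1$.

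The extended-potential term contributes, via the chain rule and the identity $\langle\alpha,\rho'(\eta,v)\rangle=\langle\mathbf{J}_V(v,\alpha),\eta\rangle$, the quantity $\langle h\mathbf{J}_V(\partial V^{0,ext}_i/\partial\bar\alpha_i^{k},\bar\alpha_i^{k}),\eta_i^{k}\rangle$, in direct analogy with the continuous argument; linearity of $\mathbf{J}_V$ in its first slot absorbs the $h$ factor. The collision potential, after an index swap $i\leftrightarrow l$ in $\sum_i\sum_l\langle\partial V_i/\partial g_l^{k},\delta g_l^{k}\rangle$ and left-trivialization, produces $h\sum_{l=1}^{s}T^{*}_{\overline{e}_i}L_{g_i^{k}}(\partial V_l/\partial g_i^{k})$ as the coefficient of $\eta_i^{k}$; since $g_1^{k}$ is fixed at the identity no such contribution is generated for $i=1$, producing the indicator $\Theta^{i}_{1}$. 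Equating the total coefficient of each $\eta_i^{k}$ to zero and rearranging gives \eqref{eqlpdiscrete}. Finally, \eqref{alphadiscrete} follows directly from
\[
\bar\alpha_i^{k+1}=\rho^{*}_{g_i^{k+1}}(\alpha_i^{0})=\rho^{*}_{g_i^{k}\mathcal{R}(hu_i^{k})}(\alpha_i^{0})=\rho^{*}_{\mathcal{R}(hu_i^{k})}\rho^{*}_{g_i^{k}}(\alpha_i^{0})=\rho^{*}_{\mathcal{R}(hu_i^{k})}(\bar\alpha_i^{k}),
\]
using \eqref{reconstructiondiscrete} and the anti-homomorphism property $\rho^{*}_{gh}=\rho^{*}_{h}\rho^{*}_{g}$.

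The main obstacle is the careful bookkeeping in the discrete summation by parts combined with Lemma \ref{lemmadiscrete}(ii): it is precisely that identity which converts the $\hbox{Ad}^{*}$-shifted pairing into the clean $(d\mathcal{R}^{-1}_{(-hu_i^{k-1})})^{*}\mu_i^{k-1}$ form on the right-hand side of \eqref{eqlpdiscrete}. Every other step is either a direct translation of the argument used in Theorem \ref{main-theo} or a routine application of the chain rule and the definitions of the momentum map and left-trivialization.
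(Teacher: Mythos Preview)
Your proposal is correct and follows essentially the same variational argument as the paper: set up the augmented discrete action, take independent variations in $\mu_i^k$, $u_i^k$ and $g_i^k$ (trivialized as $\eta_i^k$), invoke Lemma~\ref{lemmadiscrete}(i) for $\delta\mathcal{R}^{-1}(\xi_i^{k,k+1})$, perform a discrete summation by parts together with Lemma~\ref{lemmadiscrete}(ii), and handle the obstacle and collision potentials exactly as in the continuous Theorem~\ref{main-theo}. The only cosmetic difference is that the paper first passes to the reduced discrete Lagrangian $\ell_{ext,d}$ on $G^{s-1}\times G^{s}\times\mathfrak{g}^{s}\times(\mathfrak{g}^{*})^{s}\times(V^{*})^{s}$ before varying, whereas you vary $\mathcal{L}_{ext,d}$ directly; by $G$-invariance these are equivalent, and your explicit derivations of \eqref{reconstructiondiscrete} from the $\mu$-variation and of \eqref{alphadiscrete} from the definition of $\bar\alpha_i^{k}$ are in fact more detailed than what the paper spells out.
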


%\todo{Faltan todas las condiciones sobre $\alpha$}

\begin{proof}
Since the cost functions and the potential functions satisfy assumptions (i) - (vii), as in the continuous-time case, it is possible to induce the reduced augmented discrete Lagrangian $\ell_{ext,d}:G^{s-1}\times G^s \times\mathfrak{g}^s\times(\mathfrak{g}^{*})^s\times (V^{*})^s\to\mathbb{R}$ as 
\begin{align*}
    \ell_{ext,d}(g^{k},\xi^{k,k+1}, u^k,\mu^k,\bar\alpha^{k})=& h\sum_{i=1}^{s}(C_{i}(u_{i}^k) + V_{i}^{0,ext}(\bar\alpha^{k}_i) \\&+\Big\langle \mu_{i}^k,\frac{1}{h}\mathcal{R}^{-1}(\xi_{i}^{k,k+1}) -u_{i}^k\Big\rangle + V_{i}(g^{k}),\end{align*}
where $\bar\alpha^{k}_i=\rho_{g^{k}_i}^{*}(\alpha_i^0)$ for a fixed $\alpha_i^0\in V^{*}$ satisfying $\alpha_i^0=\rho^*_{g_i^0}(\alpha_i^0)$ and, with a slight abuse of notation, $C_{i}(u_{i}^k) = C_{i}(e_{i}, u_{i}^k)$ and $V_{i}^{0,ext}(\alpha^{k}_i)=V_{i}^{0,ext}(e, \alpha^{k}_i)$. Notice that, also here, $g_{1}^{k}$ is set to be the identity element, so that we have $g^{k}\in G^{s-1}$.

As in the proof for Theorem \ref{main-theo}, the technical part is to show that a normal extrema of the reduced variational principle
\begin{equation}\label{discrete:reduced:action}
    \delta \sum_{k=0}^{N-1} \ell_{ext,d}(g^{k}, \xi^{k,k+1}, u^k,\mu^k,\bar\alpha^{k}) = 0
\end{equation}
satisfies equations \eqref{reconstructiondiscrete}-\eqref{momentumdiscrete} for all variations of $\mathcal{R}^{-1}(\xi^{k,k+1})$ (induced by variations of $g^k$ vanishing at the endpoints), $u^k$ and $\bar{\alpha}^k$ of the form $\rho_{\eta^{k}}^{'*}(\bar\alpha^{k})$ where $\eta^k\in\mathfrak{g}^s$ vanishes at the endpoints. Then, similarly as in the proof for Theorem \ref{main-theo}, it follows that a normal extrema for the optimal control problem \eqref{ocpdiscrete} satisfies the variational principle
$$\delta \sum_{k=0}^{N-1} \mathcal{L}_{d} (g^k,g^{k+1},u^k,\mu^k) = 0,$$ for all variations of $g^k$ (vanishing at the endpoints), $\mathcal{R}^{-1}(\xi^{k,k+1})$ (induced by variations of $g^k$) and $u^k$.

Note that 
\begin{equation*}
    \begin{split}
        0  & = \delta \sum_{k=0}^{N-1} \ell_{ext,d}(g^{k}, \xi^{k,k+1}, u^k,\mu^k,\bar\alpha^{k}) \\
        & = \sum_{k=0}^{N-1}\sum_{i=1}^{s}h\left[ \Big\langle \frac{\partial C_{i}}{\partial u_{i}^{k}} - \mu_{i}^{k}, \delta u_{i}^{k} \Big\rangle + \Big\langle \frac{\partial V_{i}^{0,ext}}{\partial \bar\alpha^{k}}, \delta \bar\alpha^{k}\Big\rangle + \sum_{l=2}^{s}\Big \langle \frac{\partial V_{i}}{\partial g_{l}^{k}}, \delta g_{l}^{k} \Big\rangle \right.\\
        & + \left. \Big\langle \mu_{i}^{k}, \frac{1}{h} d\mathcal{R}_{h u_{i}^{k}}^{-1}(-\eta_{i}^{k}+\Ad_{\mathcal{R}(h u_{i}^{k})}\eta_{i}^{k+1})\Big\rangle \right]
    \end{split}
\end{equation*}
where we used Lemma \ref{lemmadiscrete} to obtain the last term. Since variations $\delta u_{i}^{k}$ are arbitrary we obtain $\mu_{i}^{k} = \frac{\partial C_{i}}{\partial u_{i}^{k}}$. As for the second term we have that 
\begin{equation*}\Big\langle \frac{\partial V_{i}^{0,ext}}{\partial \bar\alpha^{k}}, \delta \bar\alpha^{k}\Big\rangle = \Big\langle \frac{\partial V_{i}^{0,ext}}{\partial \bar\alpha^{k}}, \rho_{\eta_{i}^{k}}^{'*}(\bar\alpha^{k})\Big\rangle = \Big\langle \mathbf{J}_V\left(\frac{\partial V_{i}^{0,ext}}{\partial \bar\alpha^{k}},\bar\alpha^{k}\right),\eta_{i}^{k} \Big\rangle.\end{equation*}
As we have show along the proof for Theorem \ref{main-theo}, we have that
\begin{equation*}
        \sum_{l=2}^{s} \Big\langle \frac{\partial V_{i}}{\partial g_{l}^{k}}, \delta g_{l}^{k} \Big\rangle= \sum_{l=2}^{s} \Big\langle T^*_{\overline{e}_l}L_{g_l^{k}}\bigg(\frac{\partial V_{i}}{\partial g_l^{k}}\bigg),\eta_l^{k}\Big\rangle.
\end{equation*}
and the indexes might be interchanged. The last term to obtain equation \eqref{eqlpdiscrete} may be dealt with, using integration by parts in discrete-time, which is just rearranging the indexes, together with the second statement in Lemma \ref{lemmadiscrete}, and  therefore, it follows the derivation of equations \eqref{reconstructiondiscrete}-\eqref{momentumdiscrete}.
\end{proof}
\begin{remark}
Equations \eqref{reconstructiondiscrete}-\eqref{momentumdiscrete} are as a discrete approximation of the Lie-Poisson equations for the Hamiltonian version of the optimal control problem considered in \cite{SCD}. The equation $\displaystyle{\mu_i^k=\left(\frac{\partial C_i}{\partial u^k_i}\right)}$ represents the discrete time version of the reduced Legendre transformation and the equation $g^{k+1}_i=g^k_i\mathcal{R}(hu^k_i)$ is the analogous of the reconstruction equation in the discrete time counterpart. These three equations are used to compute $u_i^{k},\mu_i^{k}$ and $g^{k+1}_i$ given $u_i^{k-1}$, $\mu_i^{k-1}$, $g^{k-1}_i$ and $g^{k}_i$ from $k=1$ to $k=N-1$.\hfill$\diamond$ \end{remark}

To compute the discrete-time reduced necessary condition for the optimal control problem \eqref{ocpdiscrete} we must enforce boundary conditions given by the continuous-time quantities. More precisely, we must set
%\textcolor{red}{\begin{equation}
 %   \begin{split}
  %      \mu_i^0=&\left(\frac{\partial C_i}{\partial u_i}(u_i(0))\right)+h \sum_{l=1}^{s} T_{\overline{e}_{i}}^{*}L_{g_i^0}\left(\frac{\partial V_{l}}{\partial g_i^0}\right),\\
   %     \frac{\partial C_i}{\partial u_i}(u_i(T))=&\left(d\mathcal{R}^{-1}_{(-hu_i^{N-1})}\right)^{*}\mu^{N-1}_i+h\sum_{l=1}^{s}T_{\overline{e}_{i}}^{*}L_{g_i^N}\left(\frac{\partial V_{l}}{\partial g_i^N}\right)
    %\end{split}
%\end{equation}}

\begin{equation}
    \begin{split}
        \left(d\mathcal{R}^{-1}_{(hu_i^{0})}\right)^{*}\mu_i^0=&\frac{\partial C_i}{\partial u_i}(u_i(0))+h\Theta_{1}^{i} \sum_{l=2}^{s} T_{\overline{e}_{i}}^{*}L_{g_i^0}\left(\frac{\partial V_{l}}{\partial g_i^0}\right) + h \mathbf{J}_V\left(\frac{\partial V_{i}^{0,ext}}{\partial \bar\alpha^{0}},\bar\alpha^{0}\right),\\
        \frac{\partial C_i}{\partial u_i}(u_i(T))=&\left(d\mathcal{R}^{-1}_{(-hu_i^{N-1})}\right)^{*}\mu^{N-1}_i\label{vc2},
    \end{split}
\end{equation}
relating  the  momenta  at  the initial and  final  times, and  used  to transform boundary values between the continuous and discrete representation. They follow from the principle that any variation with free boundary points of the action \eqref{discrete:reduced:action} along a solution of equations \eqref{reconstructiondiscrete}-\eqref{alphadiscrete} equals the change in momentum $\langle \frac{\partial C_i}{\partial u_i}(u_i(T)), g_{i}^N \delta g_{i}^{N} \rangle - \langle \frac{\partial C_i}{\partial u_i}(u_i(0)), g_{i}^N \delta g_{i}^{N} \rangle$ (see \cite{KobM} for a discussion in the single agent case).

\begin{remark}
If we choose the midpoint rule to discretize the potential $V_{i}$, then we would obtain the following boundary conditions
\begin{equation*}
    \begin{split}
        \left(d\mathcal{R}^{-1}_{(hu_i^{0})}\right)^{*}\mu_i^0=&\frac{\partial C_i}{\partial u_i}(u_i(0))+\frac{h}{2}\Theta_{1}^{i} \sum_{l=1}^{s} T_{\overline{e}_{i}}^{*}L_{g_i^0}\left(\frac{\partial V_{l}}{\partial g_i^0}\right) + h \mathbf{J}_V\left(\frac{\partial V_{i}^{0,ext}}{\partial \bar\alpha^{0}},\bar\alpha^{0}\right),\\
        \frac{\partial C_i}{\partial u_i}(u_i(T))=&\left(d\mathcal{R}^{-1}_{(-hu_i^{N-1})}\right)^{*}\mu^{N-1}_i  + \frac{h}{2}\Theta_{1}^{i} \sum_{l=1}^{s} T_{\overline{e}_{i}}^{*}L_{g_i^N}\left(\frac{\partial V_{l}}{\partial g_i^N}\right).\hfill\diamond
    \end{split}
\end{equation*}
\end{remark}

The boundary condition $g_s(T)$ for agent $s$ is enforced by the relation \begin{equation}\label{gT}\mathcal{R}^{-1}((g^N_s)^{-1}g_s(T))=0.\end{equation} Recalling that $\mathcal{R}(0)=\overline{e}_s$, this last expression just means that $g^N_s=g_s(T)$. Moreover, by computing recursively the equation $g^{k+1}_s=g^{k}_s\mathcal{R}(hu_s^k)$ for $k=1,\ldots,N-1$, using that $g_s^{0}=g_s(0)$ and $\mathcal{R}(0)=\overline{e}_s$, it is possible to translate the final configuration $g_s^N$ in terms of $u^k_s$ such that there is no need to optimize over any of the configurations $g^k_s$. In that sense, \eqref{eqlpdiscrete} for $i=s$, $l=0$ together with \begin{equation}\label{gNeq}
\mathcal{R}^{-1}\left[\mathcal{R}(h(u^{N-1}_s)^{-1}\ldots\mathcal{R}(hu^0_s)^{-1}(g_s(0))^{-1}g_s(T)\right]=0,
\end{equation} form a set of $(nN)$-equations (since $\dim\mathfrak{g}=n$) where $nN$ unknowns are for $u^{0:N-1}_s$. 

%\textcolor{red}{These equations can be solved by using a Newton-like method for nonlinear algebraic equations. Consequently, the optimal control problem for agent $s$ has become a nonlinear root finding problem. From the set of controls $u^{0:N-1}_s$ and boundary conditions $(g_s(0), g_s(T))$, we are able to reconstruct the configuration trajectory by means of the reconstruction equation $g^{k+1}_s=g^k_s\mathcal{R}(hu^k_s)$.}

The numerical algorithm to compute the reduced optimality conditions is summarized in Algorithm \ref{Algorithm}.

%\eqref{eqlpdiscrete} eq 21, \eqref{gNeq} eq 28
\vspace{-0.2cm}%
%\alglanguage{pseudocode}
\begin{algorithm}
\small
\caption{Reduced conditions for optimality}
\label{Algorithm}
\begin{algorithmic}[1]
\State \textbf{Data:} Lie group $G$, its Lie algebra $\mathfrak{g}$, cost functions $C_i$, artificial potential functions $V_{ij}$, $V_{i,\hbox{ext}}^{0}$, final time $T$, $\#$ of steps $N$.\\ \textbf{inputs:} $g_i(0)$, $g_i(T)$, $u_i(0)$, $u_i(T)$, $\alpha^0_i$, $u_{i}^{0}$ for all $i=1,\ldots,s$ and $h=T/N$.
    %\LineComment{}
    \For {$i=1 \to s$}
        \State Fix $g_{i}^{0}=g_{i}(0)$ and $\alpha^0_i$
        \State solve \eqref{reconstructiondiscrete} and \eqref{alphadiscrete} for $k=0$.
    \EndFor
    \State \textbf{outputs:} $g^1_s$, $\alpha^1_s$
    \For {$k=1 \to N-1$}
        \For {$i= 1 \to s$} 
        %\Comment %\emph{first stage}
                    %\State Set $\mu_{i}^{0}$ and $\mu_{i}^{N-1}$ by \eqref{vc2}.
                    \State solve \eqref{reconstructiondiscrete}-\eqref{alphadiscrete} subject to \eqref{vc2}.
                %\EndIf
        \EndFor
    \EndFor
    \State \textbf{outputs:} $g_s^{0:N-1}$, $u_s^{0:N-1}$, $\mu_s^{0:N-1}, \alpha_s^{0:N-1}$.
    \State \textbf{Compute} $g_s^{0:N-1}$, $u_s^{0:N-1}$, $\mu_s^{0:N-1}, \alpha_s^{0:N-1}$ \textbf{subjected} to \eqref{gNeq}.
    %\State \textbf{reconstruction:} solve for $g_s^{0:N-1}$ from \eqref{reconstructiondiscrete}.
    %\Comment \emph{second stage}
   %\LineComment{}
%\State \textbf{outputs:} $g_{i}^{0:N}$, $u_i^{0:N-1}$, $\mu_i^{0:N-1}$ for all $i=1,\ldots,s-1$.
%\EndProcedure
%\vspace{0.4cm}
\Statex
\end{algorithmic}
  %\vspace{-0.4cm}%
\end{algorithm}

Note also that the exact form of equations \eqref{reconstructiondiscrete}-\eqref{momentumdiscrete} depends on the choice of $\mathcal{R}$. This choice will also influence the computational efficiency of the optimization framework when the above equalities are enforced as constraints. For instance, in Section \ref{sec6}, we will employ the Cayley transform on the Lie group $SE(2)$ as a choice of $\mathcal{R}$ to write in a compact form the numerical integrator \cite{Rabee}, \cite{KM}, but another natural choice would be to employ the exponential map, as we explained in Section \ref{sec6.1}.

%\subsection{Preservation of continuous time-structures}

\section{Case Study}\label{sec6}

In this case study we apply the proposed reduction by symmetry strategy to an optimal control for autonomous surface vehicles (ASVs). The configuration space whose elements determine the motion of each ASV is $SE(2)\cong SO(2)\times\R$. An element $g_i\in SE(2)$ is given by $g_i=\begin{pmatrix}
\cos\theta_i&-\sin\theta_i&x_i\\
\sin\theta_i&\cos\theta_i&y_i\\
0          &          0& 1
\end{pmatrix}$, where $(x_i,y_i)\in\R^2$ represents the center of mass of a planar rigid body describing the ASV and $\theta_i$ represents the angular orientation of the ASV. The control inputs, for each ASV, are given by $u_i=(u_i^1,u_i^2)$ where $u_i^1$ denotes the speed of the center of mass for the ASV and $u_i^2$ denotes the angular velocity of the ASV. %For more details see \cite{B}. 

The kinematic equations for the multi-agent system are:
\begin{equation}\label{kinem-eqs}
  \dot{x_i}=u_i^2\cos\theta_i, \; \; \dot{y_i}=u_i^2\sin\theta_i,  \; \; \dot{\theta_i}=u_i^1,\, i=1,\ldots,s.  
\end{equation}

Using the notation of Example \ref{example1}, the Lie algebra $\se$ is identified with $\R^2$ through the isomorphism $\begin{pmatrix}
-a\mathbb{J}&b\\
0&0
\end{pmatrix}\mapsto (a,b).$
The elements of the basis of the Lie algebra $\se$ are $\displaystyle{e_1=\begin{pmatrix}
0&-1&0\\
1&0&0\\
0&0&0
\end{pmatrix}, 
e_2=\begin{pmatrix}
0&0&1\\
0&0&0\\
0&0&0
\end{pmatrix},
e_3=\begin{pmatrix}
0&0&0\\
0&0&1\\
0&0&0
\end{pmatrix}}$,

\noindent which satisfy $[e_1,e_2]=e_3,\; [e_2,e_3]=0, \; [e_3,e_1]=e_2$. Thus, the kinematic equations (\ref{kinem-eqs}) take the form $\dot{g_i}=g_iu_i=g_i(u_i^1e_1+u_i^2e_2)$ and give rise to a left-invariant control system on $SE(2)^s\times \mathfrak{se}(2)^s$. The inner product on $\se$ is given by $\langle\langle\xi_1,\xi_2\rangle\rangle:=tr(\xi_1^T\xi_2)$ for $\xi_1,\xi_2\in\se$ and hence, the norm is given by $||\xi||=\sqrt{tr(\xi^T\xi)},$ for any $\xi\in\se$. The dual Lie algebra $\se^*$ of $SE(2)$ is defined through the dual pairing, $\langle\alpha,\xi\rangle=tr(\alpha^{T}\xi)$, where $\alpha\in\se^*$ and $\xi\in\se$ hence, the elements of the basis of $\se^*$ are 
$\displaystyle{e^1=\begin{pmatrix}
0&\frac{1}{2}&0\\
-\frac{1}{2}&0&0\\
0&0&0
\end{pmatrix}, 
e^2=\begin{pmatrix}
0&0&0\\
0&0&0\\
1&0&0
\end{pmatrix},
e^3=\begin{pmatrix}
0&0&0\\
0&0&0\\
0&1&0
\end{pmatrix}}$.

Consider the cost function $C_i(g_i,u_i)=\frac{1}{2}\langle u_i,u_i\rangle$ and the artificial potential function $V_{ij}:SE(2)\times SE(2)\to\R$ given by $\displaystyle{V_{ij}(g_i,g_j)=\frac{\sigma_{ij}}{2((x_i-x_j)^2+(y_i-y_j)^2-4\overline{r}^2)}}$, where $\sigma_{ij}\in\R_{\geq 0}$ and $\overline{r}$ is the radius of the disk each agent occupies as defined at the end of Section III. Consider a spherical obstacle with unit radius and without loss of generality let it be centered at the origin. Hence, consider the obstacle avoidance potential function $V_i^0:SE(2)\to\R$, $\displaystyle{V_i^0(g_i)=\frac{\sigma_{i0}}{2(x^2+y^2-(\overline{r}+1)^2)}}$, where $\sigma_{i0}\in\mathbb{R}_{>0}$. 

Note that the obstacle avoidance potential functions are not $SE(2)$-invariant but $SO(2)$-invariant, so they break the symmetry. Using the norm of $\se$ and for $\overline{r}=1$, $V_{ij}$ and $V_i^0$ are equivalently given by
$\displaystyle{V_{ij}(g_i,g_j)=\frac{\sigma_{ij}}{2(||\Ad_{g_i^{-1}g_j}e_1||^2-6)}}$ and 
$\displaystyle{V_i^0(g_i)=\frac{\sigma_{i0}}{2(||\Ad_{g^{-1}_i}e_1||^2-6)}}$.

Let $V=\se^*$, so we define the extended potential functions $\Vezero:SE(2)\times \se\to\R$ by
$\Vezero(g_i,\alpha)=\frac{\sigma_{i0}}{2(||\Ad_{g^{-1}_i}\alpha||^2-6)}$, which are $SE(2)$-invariant under the action of $\tilde{\Phi}$ given by \eqref{eq_phi_tilde}, i.e. $\Vezero\circ\tilde{\Phi}=\Vezero,$ for any $g\in SE(2)$. Since, $V=\se^*$ we have $\displaystyle{\mathbf{J}_V\Big(\frac{\partial\Vezero}{\partial\alpha_i},\alpha_i\Big)=\ad^*_{\alpha_i}\Big(\frac{\partial\Vezero}{\partial\alpha_i}\Big)}$, and equations (\ref{E-P eqs}) and (\ref{E-P eqs param}) yield \begin{align*}\frac{d}{dt}\Big(u_i+\lambda_i\Big) =& \ad^*_{u_i}\Big(u_i+\lambda_i\Big) +\ad^*_{\alpha_i}\Big(\frac{\partial\Vezero}{\partial\alpha_i}\Big)\\&+\Theta_1^i\sum_{j\in\mathcal{N}_i}T^*_{\overline{e}_i}L_{g_i}\bigg(\frac{\partial V_{ij}}{\partial \theta_i}e^1+\frac{\partial V_{ij}}{\partial x_i}e^2+\frac{\partial V_{ij}}{\partial y_i}e^3\bigg),\end{align*}together with $\dot{\alpha}_i=-\ad_{u_i}\alpha_i$ and $\alpha_i=\Ad_{g_i^{-1}}\alpha_0$. 

Note also that $u_i=u_i^1e_1+u_i^2e_2$, $\alpha_i=\alpha_i^1e_1+\alpha_i^2e_2+\alpha_i^3e_3$ and $\lambda_i=\lambda_3^ie^3$ thus, \begin{align*}
    ad^*_{u_i}(u_i+\lambda_i) &= \begin{pmatrix}
0&-\frac{u_i^2\lambda_3^i}{2}&0\\
\frac{u_i^2\lambda_3^i}{2}&0&0\\
u_i^1\lambda_3^i&-u_i^1u_i^2&0
\end{pmatrix}, \,\, \ad_{u_i}\alpha_i = \begin{pmatrix}
0&0&-u_i^1\alpha_i^3\\
0&0&u_i^1\alpha_i^2-u_i^2\alpha_i^1\\
0&0&0
\end{pmatrix}, \\
 & \\
\ad^*_{\alpha_i}\Big(\frac{\partial\Vezero}{\partial\alpha_i}\Big) &= \begin{pmatrix}
		0&0&0\\
		0&0&0\\
		\Gamma_{i,0}^{31}&\Gamma_{i,0}^{32}&0
		\end{pmatrix} =\frac{\sigma_{i0}\alpha_i^1}{(||\alpha_i||^2-6)^2}\begin{pmatrix}
		0&0&0\\
		0&0&0\\
		-\alpha_i^3&\alpha_i^2&0
		\end{pmatrix}, \\
 & \\
T^*_{\overline{e}_i}L_{g_i}\Big(\frac{\partial V_{ij}}{\partial g_i}\Big) &= \begin{pmatrix}
0&0&0\\
0&0&0\\
\Gamma_{ij}^{31}&\Gamma_{ij}^{32}&0
\end{pmatrix} = \frac{-\sigma_{ij}}{((x_{ij})^2+(y_{ij})^2-4)^2}\begin{pmatrix}
0&0&0\\
0&0&0\\
x_{ij}&y_{ij}&0
\end{pmatrix},
\end{align*}where $x_{ij}=x_i-x_j, \; y_{ij}=y_i-y_j$ and $\displaystyle{\Ad_{g_i^{-1}}\alpha_0=\begin{pmatrix}
0&-1&x_i\sin\theta_i-y_i\cos\theta_i\\
1&0&x_i\cos\theta_i+y_i\sin\theta_i\\
0&0&0
\end{pmatrix}}$.

Therefore, by applying Proposition \ref{split prop} for $\mathfrak{r}=\{e_1,e_2\}$ and $\mathfrak{s}=\{e_3\}$, Euler-Lagrange equations for the OCP (\ref{OCP}) are

\begin{equation*}
\dot{u}_i^1=-\frac{u_i^2\lambda_3^i}{2}, 
\quad\dot{u}_i^2=u_i^1\lambda_3^i+\Theta_1^i\Gamma_{i,0}^{31}+\sum_{j\in\mathcal{N}_i}\Gamma_{ij}^{31}, \,\,\dot{\lambda}_3^i=-u_i^1u_i^2+\Theta_1^i\Gamma_{i,0}^{32}+\sum_{j\in\mathcal{N}_i}\Gamma_{ij}^{32}, 
\end{equation*}

\noindent with 
\vspace{0.3cm}

$\begin{array}{ll}
	\dot{\alpha}_i^1=0, & \alpha_i^1(0)=1, \\
	\dot{\alpha}_i^2=u_i^1\alpha_i^3,  &  \alpha_i^2(0)=x_i^0\sin\theta_i^0-y_i^0\cos\theta_i^0,  \\
	\dot{\alpha}_i^3=-u_i^1\alpha_i^2+u_i^2\alpha_i^1, & \alpha_i^3(0)=x_i^0\cos\theta_i^0+y_i^0\sin\theta_i^0. 
\end{array}$

\vspace{0.3cm}

For the discrete-time setting, one would choose
\begin{equation*}
C_{i}(g^{k}_i,u^{k}_i) = \frac{h}{2}\langle u^{k}_{ij},u^{k}_{ij}\rangle,\
V_{i}(g^{k}_i) = \frac{h\sigma_i}{2(\|\Ad_{(g^{k}_i)^{-1}}e_{1}\|^{2}-6)},\nonumber
\end{equation*}
where $\displaystyle{g^{k}_i = \begin{bmatrix}
        \cos\theta^{k}_i & -\sin\theta^{k}_i & \phantom{-}x^{k}_i\\
        \sin\theta^{k}_i & \phantom{-}\cos\theta^{k}_i & \phantom{-}y^{k}_i\\
        0 & \phantom{-}0 & \phantom{-}1
        \end{bmatrix}\in SE(2)\nonumber}$ and $\displaystyle{u^{k}_i = \sum_{j=1}^{3}u^{k}_{ij}e_{j}\in\mathfrak{se}(2)}$. Also, in the discrete-time setting, the extended potential function $V_{d,\textnormal{ext}}: SE(2)\times \mathfrak{se}(2)\to\mathbb{R}$ can be constructed in exactly the same way as in the above example and is given by
\begin{align}
V_{i}^{0,ext}(g_{i}^{k},\alpha_i) = \frac{h\sigma_i}{2(\|\Ad_{(g^{k}_i)^{-1}}\alpha_i\|^{2}-6)},\nonumber
\end{align}
where $
\displaystyle{\alpha_{i}^{k} = \sum_{j=1}^{3} \alpha_{ij}^{k}e^{j}}$. We do not give all the details again and leave it up to reader to verify that the assumptions  (i) - (vii) from \eqref{a1} are satisfied. The discrete-time equations are 
\begin{align}
(d\mathcal{R}_{hu^{k}_i}^{-1})^{*}\mu^{k}_i &= (d\mathcal{R}_{-hu^{k-1}_i}^{-1})^{*}\mu^{k-1}_i+\ad_{\bar{\alpha}^{k}_i}^{*}\frac{\partial V_{i}^{0,ext}}{\partial\bar{\alpha}^{k}_i}\nonumber\\
 &= \Theta_1^i\sum_{j\in\mathcal{N}_i}T^*_{\overline{e}_i}L_{g_i}\bigg(\frac{\partial V_{ij}}{\partial \theta_i}e^1+\frac{\partial V_{ij}}{\partial x_i}e^2+\frac{\partial V_{ij}}{\partial y_i}e^3\bigg)\nonumber,\\
\bar{\alpha}^{k+1}_i &= \Ad_{\mathcal{R}(hu^{k}_i)^{-1}}\bar{\alpha}^{k}_i, \hspace{5pt} \bar{\alpha}^{0}_i = \Ad_{(g^{0}_i)^{-1}}\alpha^{0}_i\nonumber,
\end{align}
where
\begin{align}
\ad_{\bar{\alpha}^{k}_i}^{*}\frac{\partial V_{i}^{0,ext}}{\partial\bar{\alpha}^{k}_i} 
= \frac{h\sigma_i\bar{\alpha}^{k}_{i1}}{(\|\bar{\alpha}^k_i\|^{2}-6)^{2}}\begin{bmatrix}
                                                                    \phantom{-}0 & \phantom{-}0  & \phantom{-}0\\
                                                                    \phantom{-}0  & \phantom{-}0 & \phantom{-}0\\
                                                                    -\bar{\alpha}^{k}_{i3} & \phantom{-}\bar{\alpha}^{k}_{i2} & \phantom{-}0
                                                                    \end{bmatrix}.\nonumber
\end{align}
%and $\mu_{k} = u_{k}\in\mathfrak{g}^{*}$. 

For numerical purposes, one first chooses an appropriate retraction map, such as the Cayley map or the exponential map, and then computes the quantities $(d\mathcal{R}_{hu^{k}_i}^{-1})^{*}\mu^{k}_i$ and $(d\mathcal{R}_{-hu^{k-1}_i}^{-1})^{*}\mu^{k-1}_i$. As an example, if we choose the Cayley map $\text{cay}: \mathfrak{se}(2)\to\mathrm{SE}(2)$ as the retraction map, (see \cite{Rabee} and \cite{KM} for instance) then we have

%\begin{align}
%[d\text{cay}_{hu_{k}}^{-1}]^{*}\mu_{k} = \begin{bmatrix}
                                         %-\dfrac{h^{2}(u_{k}^{1})^{2}u_{k}^{2}}{4}+\dfrac{h(u_{k}^{2})^{2}}{2} & \phantom{-}u_{k}^{1}+
																				 %\dfrac{h^{2}(u_{k}^{1})^3}{4} & \phantom{-}0\\
                                         %-u_{k}^{1}-\dfrac{hu_{k}^{1}u_{k}^{2}}{2} & \phantom{-}0 & \phantom{-}0\\
                                         %-\dfrac{h(u_{k}^{1})^{2}}{2}+u_{k}^{2} & \phantom{-}0 & \phantom{-}0
                                         %\end{bmatrix}\nonumber
%\end{align}	

\begin{align}
[d\text{cay}_{hu_{i}^{k}}^{-1}]^{*}\mu_{i}^{k} = \begin{bmatrix}
            0 & \frac{1}{2}\gamma_i & 0\\ -\frac{1}{2}\gamma_i & \phantom{-}0 & \phantom{-}0\\
            \mu_{i2}^{k}-\frac{h u_{i1}^{k} \mu_{i3}^{k}}{2} & \frac{h u_{i1}^{k} \mu_{i2}^{k}}{2}+\mu_{i3}^{k} & \phantom{-}0 \end{bmatrix}\nonumber
\end{align}
where \begin{equation*}\gamma_i = \left(\frac{h^2 (u_{i1}^{k})^{2}}{4}+1\right)\mu_{i1}^{k}+\left(\frac{h^2 u_{i1}^{k} u_{i2}^{k}}{4}-\frac{h u_{i3}^{k}}{2}\right)\mu_{i2}^{k}+\left(\frac{h^2 u_{i1}^{k} u_{i3}^{k}}{4}+\frac{h u_{i2}^{k}}{2}\right)\mu_{i3}^{k}\end{equation*} and $
\displaystyle{\mu_{i}^{k} = \sum_{j=1}^{3} \mu_{ij}^{k}e^{j}}$.
%and
%\begin{align}
%[d\text{cay}_{-hu_{k-1}}^{-1}]^{*}\mu_{k-1} = \begin{bmatrix}
                                              %-\dfrac{h^{2}(u_{k-1}^{1})^{2}u_{k-1}^{2}}{4}-\dfrac{h(u_{k-1}^{2})^{2}}{2} & \phantom{-}
																							%u_{k-1}^{1}+\dfrac{h^{2}(u_{k-1}^{1})^3}{4} & \phantom{-}0\\
                                              %-u_{k-1}^{1}+\dfrac{hu_{k-1}^{1}u_{k-1}^{2}}{2} & \phantom{-}0 & \phantom{-}0\\
                                              %\phantom{-}\dfrac{h(u_{k-1}^{1})^{2}}{2}+u_{k-1}^{2} & \phantom{-}0 & \phantom{-}0
                                              %\end{bmatrix}.\nonumber
%\end{align}
Note that for $\displaystyle{v = \sum_{i=1}^{3}v^{i}e_{i}\in\mathfrak{g}}$, the matrix representation for $d\text{cay}_{v}^{-1}$ is given by
\begin{align}
[d\text{cay}_{v}^{-1}] = \begin{bmatrix}
                         1+\dfrac{(v^1)^2}{4} & \phantom{-}0  & \phantom{-}0\\
                         \dfrac{v^1v^2}{4}-\dfrac{v^3}{2} & \phantom{-}1 & \phantom{-}\dfrac{v^1}{2}\\
                         \dfrac{v^1v^3}{4}+\dfrac{v^2}{2} & -\dfrac{v^1}{2} & \phantom{-}1
                         \end{bmatrix}.\nonumber
\end{align}

\section{Conclusions}\label{sec7}

We studied the reduction by symmetry for optimality conditions of extrema in an OCP for collision and obstacle avoidance of left-invariant multi-agent control system on Lie groups, by exploiting the physical symmetries of the agents and obstacles. Reduced optimality conditions are obtained using techniques from variational calculus and Lagrangian mechanics on Lie groups, in the continuous-time and discrete-time settings. We applied the results to an OCP for multiple unmanned surface vehicles. The method proposed in this work allows the construction of position and velocity estimators, by discretizing the variational principle given in Theorem \ref{main-theo} - instead of discretizing the equations of motion - and by deriving variational integrators 
 - see Theorem \ref{theorem2}. The reduction of sufficient conditions for optimality will be also studied by using the notion of  conjugate points as in \cite{borum} in future work, as well as the reduction by symmetry of the variational obstacle avoidance problems \cite{jacob} on semidirect products of Lie groups endowed with a bi-invariant metric on a Riemannian manifold.

\section*{Acknowledgments}
L. Colombo is very grateful to A. Bloch, R. Gupta and T. Ohsawa for many useful comments and stimulating discussions during the last years on the topic of this paper, which is inspired by our common previous work \cite{BCGO}.

\end{document}